\titleformat{\section}{\normalfont\scshape\centering}{\thesection}{1em}{}
  \titleformat{\subsection}{\bfseries}{\thesubsection}{1em}{}
\newtheorem{theorem}{Theorem}[section]
\newtheorem{corollary}[theorem]{Corollary}
\newtheorem{lemma}[theorem]{Lemma}
\theoremstyle{definition}
\newtheorem{definition}[theorem]{Definition}
\newtheorem{remark}[theorem]{Remark}
\newtheorem{conjecture}[theorem]{Conjecture}
\numberwithin{equation}{section}
\renewcommand\leq{\leqslant}
\renewcommand\geq{\geqslant}
\newcommand{\sgn}{\text{sgn}}
\renewcommand{\Re}{\textnormal{Re}}
\newcommand\E{\mathbb{E}}
\newcommand\vol{\textnormal{vol}}
\newcommand{\m}{\mathrm{mod}\ }
\newcommand{\Mod}[1]{\ (\mathrm{mod}\ #1)}
\renewcommand\d{\,\mathrm{d}}
\begin{document}
\title{On a Bohr set analogue of Chowla's conjecture}

\author{Joni Ter\"{a}v\"{a}inen}
\address{Department of Mathematics and Statistics, University of Turku, 20014 Turku, Finland}
\email{joni.p.teravainen@gmail.com}

\author[Aled Walker]{Aled Walker}
\address{Department of Mathematics, King's College London, London WC2R 2LS, United Kingdom}
\email{aled.walker@kcl.ac.uk}

\begin{abstract} 
Let $\lambda$ denote the Liouville function. We show that the logarithmic mean of $\lambda(\lfloor \alpha_1n\rfloor)\lambda(\lfloor \alpha_2n\rfloor)$ is $0$ whenever $\alpha_1,\alpha_2$ are positive reals with $\alpha_1/\alpha_2$ irrational.  We also show that for $k\geq 3$ the logarithmic mean of $\lambda(\lfloor \alpha_1n\rfloor)\cdots \lambda(\lfloor \alpha_kn\rfloor)$ has some nontrivial amount of cancellation, under certain rational independence assumptions on the real numbers $\alpha_i$. Our results for the Liouville function generalise to produce independence statements for general bounded real-valued multiplicative functions evaluated at Beatty sequences. These results answer the two-point case of a conjecture of Frantzikinakis (and provide some progress on the higher order cases), generalising a recent result of  Crn\v{c}evi\'c--Hern\'andez--Rizk--Sereesuchart--Tao. 

As an ingredient in our proofs, we establish bounds for the logarithmic correlations of the Liouville function along Bohr sets.
\end{abstract}

\maketitle
\section{Introduction}

Let $\lambda:\mathbb{N}\to \{-1,+1\}$ denote the Liouville function: that is, the completely multiplicative function with $\lambda(p)=-1$ for all primes $p$.  In this note, we consider correlations of the Liouville function (as well as arbitrary multiplicative functions) along Beatty sequences $\lfloor \alpha n\rfloor$. 

For correlations of `length 1' (i.e. single averages of $\lambda$ over Beatty sequences), it  follows from a classical exponential sum estimate of Davenport\footnote{Indeed, by Davenport's result, $\sum_{n\leq X}\lambda(n)e(\beta n)=o(X)$ for all $\beta$. If $\alpha$ is rational, the claim follows easily from this. If $\alpha$ is irrational, by considering the sums $\sum_{n \leqslant X} (1 \pm \lambda(n)) e(k\alpha n)$ and applying Weyl's criterion, the sequence $\{\alpha n: \lambda(n)=v\}$ is uniformly distributed modulo $1$ for $v\in \{-1+1\}$. But now if $\alpha>1$ then $\sum_{n\leq X}\lambda(\lfloor \alpha n\rfloor)=\sum_{m\leq \alpha X, m/\alpha \in [1-1/\alpha,1)\pmod 1}\lambda(m)$, and by the uniform distribution property mentioned above this is $o(X)$. The case $\alpha \in (0,1)$ follows along similar lines.}~\cite{davenport} that for all $\alpha >0$ \[\lim_{X \rightarrow \infty} \frac{1}{X}\sum_{n \leqslant X} \lambda(\lfloor \alpha n \rfloor) = 0.\] The following far-reaching extension was posed as an open problem by Frantzikinakis\footnote{Special case of \cite[Problem 2]{Fr20}, see remark following this problem. Also stated by Frantzikinakis in a talk at Additive Combinatorics Webinar, July 2020.}.

\begin{conjecture} 
\label{conj_frantz}
Let $k\geq 1$ be an integer, and let $\alpha_1,\ldots, \alpha_k>0$ be such that $1,\alpha_1,\ldots, \alpha_k$ are linearly independent over $\mathbb{Q}$. Then, for any multiplicative functions $f_1,\ldots, f_k:\mathbb{N}\to [-1,1]$, we have
\begin{align}\label{eq_BohrElliott}
\lim_{X\to \infty}\mathbb{E}_{n\leq X}^{\log}\prod_{i=1}^k f_i(\lfloor \alpha_i n\rfloor)=\prod_{i=1}^k \lim_{X\to \infty}\mathbb{E}_{n\leq X}^{\log}f_i(n).  
\end{align}
In particular, we have
\begin{align}\label{eq_BohrChowla}
\lim_{X\to \infty}\mathbb{E}_{n\leq X}^{\log}\lambda(\lfloor \alpha_1 n\rfloor)\cdots \lambda(\lfloor \alpha_k n\rfloor)=0.    
\end{align}
\end{conjecture}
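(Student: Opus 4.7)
The plan is to carry out a Fourier-analytic reduction that trades the Beatty indicators for a Bohr-set structure, and then to invoke cancellation results for correlations of multiplicative functions twisted by additive characters.

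Concretely, assume each $\alpha_i > 1$ (the remaining cases reduce to this by elementary manipulation); then $n \mapsto \lfloor \alpha_i n \rfloor$ is a bijection from $\mathbb{N}$ onto the Beatty set $B_{\alpha_i}$, and $m \in B_{\alpha_i}$ iff $\{m/\alpha_i\} \in [1-1/\alpha_i, 1)$. Approximating this fractional-part indicator by a trigonometric polynomial of degree at most $H$ (via the Beurling--Selberg/Vaaler construction) yields $\mathbf{1}_{B_{\alpha_i}}(m) = 1/\alpha_i + \sum_{0<|h|\leq H} c_h(\alpha_i)\, e(hm/\alpha_i) + O(1/H)$. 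After inserting this expansion for each $i$ (preceded by a change of variables in one index to collapse the simplest Beatty constraint), the correlation $\mathbb{E}_{n\leq X}^{\log}\prod_i f_i(\lfloor \alpha_i n\rfloor)$ becomes a short sum over tuples $(h_1,\ldots,h_k)$ of twisted multiplicative correlations
\[
\mathbb{E}_m^{\log}\Big(\prod_{i=1}^k f_i(\text{affine image of } m)\Big)\, e(\beta m),
\]
where $\beta$ is an integer combination of the ratios $\alpha_i/\alpha_1$ and the affine images have rationally related slopes.

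The $\mathbb{Q}$-linear independence hypothesis guarantees that any non-trivial such $\beta$ is irrational, precluding a main-term (rational-frequency) contribution. The problem thereby reduces to uniform-in-$\beta$ cancellation for twisted logarithmic $k$-point correlations. For $f_i = \lambda$ and $k=2$, this amounts to showing $\sup_\beta |\mathbb{E}_m^{\log}\lambda(m)\lambda(m+h)e(\beta m)| = o(1)$ uniformly as $h$ ranges over bounded integers, which can be approached by adapting the proof of logarithmic two-point Chowla (Tao; Matom\"aki--Radziwi\l{}\l{}--Tao) in combination with the K\'atai--Bourgain--Sarnak--Ziegler orthogonality criterion. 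For general bounded multiplicative $f_i$ one replaces Chowla by its logarithmic Elliott analogue, using Hal\'asz-type structure theorems to isolate any pretentious part and handling the remainder by exponential-sum cancellation.

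The main obstacle is the higher-$k$ case: logarithmic Chowla is currently known only for odd $k$ (Tao--Ter\"av\"ainen), and the above reduction demands a twisted, uniform-in-frequency strengthening. Extending from $\lambda$ to arbitrary multiplicative $f_i$ further requires the full logarithmic Elliott conjecture, where pretentious sequences interact nontrivially with the Bohr phases. A complete proof for $k \geq 3$ must therefore either assume (a twisted form of) higher-order Chowla/Elliott, or uncover a qualitatively new source of cancellation specific to the Bohr-set setting---for instance through nilsequence decomposition and a Gowers-norm analysis adapted to these generalised Beatty patterns.
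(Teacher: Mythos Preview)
The statement you are trying to prove is labelled a \emph{conjecture} in the paper and is not proved there in full; the paper establishes only the case $k=2$ (Theorem~\ref{thm_twopoint}) and a ``99\%'' upper bound $1-\eta$ for $k\geq 3$ (Theorem~\ref{thm_hom99}). You are right that higher $k$ would require twisted higher-order Chowla/Elliott input that is currently unavailable, so there is no complete proof to compare against for $k\geq 3$.

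For $k=2$, however, your sketch contains a genuine gap in the reduction step. After changing variables to $m=\lfloor \alpha_1 n\rfloor$, the second factor becomes $f_2(\lfloor \alpha_2 n\rfloor)\approx f_2(\lfloor (\alpha_2/\alpha_1)m\rfloor)$, a Beatty evaluation with \emph{irrational} slope $\alpha_2/\alpha_1$. Fourier-expanding the Beatty indicators does not convert this into ``affine images of $m$ with rationally related slopes''; the trigonometric expansion only decomposes the \emph{indicator} $1_{B_{\alpha_i}}$, not the argument of $f_i$. So you never arrive at a correlation of the shape $\mathbb{E}_m^{\log}\lambda(m)\lambda(m+h)e(\beta m)$ with integer $h$: the second $\lambda$ is still evaluated at a genuinely irrational Beatty sequence, and no known two-point Chowla/Elliott result handles $\lambda(m)\lambda(\lfloor \theta m\rfloor)$ directly.

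The paper bridges this gap with an extra idea you are missing. It first writes $\lfloor \alpha_2 n\rfloor=\lfloor \theta\lfloor\alpha_1 n\rfloor+j\rfloor$ on Bohr pieces (Lemma~\ref{le_bohr1}), changes to $m=\lfloor \alpha_1 n\rfloor$, and then applies the K\'atai--Bourgain--Sarnak--Ziegler criterion \emph{to peel off $f_1$}. This replaces $f_1(m)f_2(\lfloor\theta m+j\rfloor)$ by autocorrelations $f_2(\lfloor p\theta n+j\rfloor)f_2(\lfloor q\theta n+j\rfloor)$ for primes $p<q$. The point is that these two Beatty sequences now have \emph{rational} slope ratio $p/q$, so Lemma~\ref{le_rationalratioBohr} expresses one as an integer-affine image of the other on Bohr pieces, yielding genuine correlations $f_2(m)f_2((qm+r)/p)$ to which Tao's two-point Elliott theorem (via Lemma~\ref{le_frantzikinakis}) applies. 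Your proposal invokes K\'atai--BSZ only as one of several black boxes for the final twisted correlation, whereas in the paper it is the mechanism that manufactures the rational slope ratio in the first place.
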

\noindent Here and throughout, $\E_{n \leqslant X}^{\log} f(n)$ denotes the logarithmic average $\frac{1}{\log X} \sum_{n \leqslant X} \frac{f(n)}{n}$. We use $\E_{n \leqslant X} f(n)$ to denote the natural average $\frac{1}{X} \sum_{n \leqslant X} f(n)$. 

\textbf{Remarks.}
\begin{itemize}
    \item The limits on the right-hand side of \eqref{eq_BohrElliott} always exist, since by Wirsing's theorem~\cite[Theorem 4.6 in Section III.4]{tenenbaum} any bounded, real-valued multiplicative function has a mean value. 

    \item The claim~\eqref{eq_BohrChowla} should hold more generally when $\alpha_i/\alpha_j$ is irrational for all $i\neq j$, but~\eqref{eq_BohrElliott} does not hold under this weaker assumption (for a counterexample, take $k=2$, $f_1(n)=f_2(n)=1_{(n,2)=1}$ and $\alpha_1=\sqrt{2}, \alpha_2=\sqrt{2}+2$).
\end{itemize}

For $k=2$, Conjecture~\ref{conj_frantz} was recently proved in~\cite[Theorem B]{CHRST} by Crn\v{c}evi\'c--Hern\'andez--Rizk--Sereesuchart--Tao, under the additional assumption that $\alpha_1 = 1$. Conjecture~\ref{conj_frantz} for $k=2$ was also posed in a more general setting of ``bounded multiplicative approximately invariant sequences'' as~\cite[Conjecture 5.1]{CHRST}, but we will only consider multiplicative functions in this note. One may also consult~\cite[Conjecture 5.2]{CHRST} to see the Liouville case of Conjecture~\ref{conj_frantz} in print when $\alpha_1 = 1$. 

Our first main theorem settles Conjecture~\ref{conj_frantz} when $k=2$, for arbitrary $\alpha_1,\alpha_2$. More generally, the following result applies to two-point correlations of bounded multiplicative functions along inhomogeneous Beatty sequences $\lfloor \alpha n+\beta\rfloor$. In the case of the Liouville function, it gives a complete characterisation of when such correlations converge to $0$.

\begin{theorem}[Two-point correlations along Beatty sequences]\label{thm_twopoint}
Let $\alpha_1,\alpha_2>0$ and $\beta_1,\beta_2\in \mathbb{R}$. Let $f_1,f_2:\mathbb{N}\to [-1,1]$ be multiplicative functions.
\begin{enumerate}
    \item Suppose that $1, \alpha_1,\alpha_2$ are linearly independent over $\mathbb{Q}$. Then\footnote{Here and in what follows, we extend multiplicative functions defined on $\mathbb{N}$ arbitrarily to $\mathbb{Z}$.}
     \begin{align*}
\lim_{X\to \infty}\mathbb{E}^{\log}_{n\leq X}  f_1(\lfloor \alpha_1 n+\beta_1\rfloor)f_2(\lfloor \alpha_2 n+\beta_2\rfloor) =\lim_{X\to \infty}\Big(\mathbb{E}^{\log}_{n\leq X}  f_1(n)\Big)\cdot \lim_{X\to \infty}\Big(\mathbb{E}^{\log}_{n\leq X}  f_2(n)\Big). 
\end{align*}

\item Suppose that $\alpha_1/\alpha_2$ is irrational. Then we have 
    \begin{align*}
\lim_{X\to \infty}\mathbb{E}^{\log}_{n\leq X}  \lambda(\lfloor \alpha_1 n+\beta_1\rfloor)\lambda(\lfloor \alpha_2 n+\beta_2\rfloor) =0. 
\end{align*}

    \item Suppose that $r:=\alpha_1/\alpha_2$ is rational. Then 
\begin{align*}
\lim_{X\to \infty}\mathbb{E}^{\log}_{n\leq X}  \lambda(\lfloor \alpha_1 n+\beta_1\rfloor)\lambda(\lfloor \alpha_2 n+\beta_2\rfloor)
\end{align*}
exists, and is $0$ if and only if for all large enough $m \in \mathbb{N}$ we have \[\lfloor \alpha_1 m+\beta_1\rfloor\neq r\lfloor \alpha_2 m+\beta_2\rfloor.\] 

\end{enumerate}
\end{theorem}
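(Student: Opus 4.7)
The plan is to reduce each of the three parts of Theorem~\ref{thm_twopoint} to a logarithmic correlation statement for bounded real-valued multiplicative functions weighted by Bohr-type cutoffs, and then to apply a Bohr-set strengthening of the relevant classical input: a Hal\'asz-type theorem for part (1), and the two-point logarithmic Chowla theorem of Matom\"aki--Radziwi{\l}{\l}--Tao for parts (2) and (3). Throughout, I may assume without loss of generality that $\alpha_1\geq\alpha_2$ and, after a short reduction, that $\alpha_1>1$. The key change of variables is $m=\lfloor\alpha_1 n+\beta_1\rfloor$: the map $n\mapsto m$ is a bijection from $\N$ onto the Beatty set $B_{\alpha_1,\beta_1}$, and membership in $B_{\alpha_1,\beta_1}$ is equivalent to the one-dimensional Bohr condition $\{(m-\beta_1)/\alpha_1\}\in(1-1/\alpha_1,1)$. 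A brief computation shows $\lfloor\alpha_2 n(m)+\beta_2\rfloor=\lfloor\gamma m+\delta_0\rfloor+\epsilon_m$, where $\gamma:=\alpha_2/\alpha_1$, $\delta_0:=\beta_2-\gamma\beta_1$, and $\epsilon_m\in\{0,1\}$ is itself a Bohr-type function of $m$ through the pair $(\{(m-\beta_1)/\alpha_1\},\{\gamma m+\delta_0\})\in\T^2$.

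For part (1), Fourier-expanding the Bohr weights reduces the problem to controlling $\E^{\log}_{m\leq X}f_1(m)f_2(\lfloor\gamma m+\delta_0\rfloor)e(\theta m)$ for various frequencies $\theta$ of the form $a/\alpha_1+b\gamma$, $a,b\in\Z$. The $\Q$-linear independence of $1,\alpha_1,\alpha_2$ carries over to that of $1,1/\alpha_1,\gamma$, giving joint equidistribution of the relevant Bohr factors on $\T^2$. A Hal\'asz-type theorem for bounded multiplicative functions twisted by a linear phase and composed with a Beatty-weighted Bohr cutoff then identifies each Fourier coefficient with $(\E^{\log}f_1)(\E^{\log}f_2)$ times the associated Bohr density, and reassembling the Fourier expansion yields the claimed factorisation.

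For part (2), irrationality of $\alpha_1/\alpha_2$ makes $\gamma$ irrational, so the same reduction leaves us needing $\E^{\log}_{m\leq X}\lambda(m)\lambda(\lfloor\gamma m+\delta_0\rfloor)e(\theta m)=o(1)$ for every $\theta\in\R$, which is the advertised Bohr-twisted two-point logarithmic Chowla bound along Beatty sequences. For part (3), write $\alpha_1/\alpha_2=p/q$ in lowest terms and observe that $d_n:=q\lfloor\alpha_1 n+\beta_1\rfloor-p\lfloor\alpha_2 n+\beta_2\rfloor$ is an integer taking only finitely many values (since $q\alpha_1=p\alpha_2$ cancels the linear part and only the bounded fractional parts $\{\alpha_i n+\beta_i\}$ remain). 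Splitting the sum by the value of $d_n$: for each $d\neq 0$, one has $b=(qa-d)/p$ with $a=pk+c_d$ and $b=qk+c_d'$ for integers $c_d,c_d'$ depending on $d$, so the contribution reduces to a correlation of $\lambda(pk+c_d)\lambda(qk+c_d')$ over $k$ in an arithmetic progression intersected with the Beatty set $B_{\alpha_1,\beta_1}$, which vanishes by the Bohr-twisted two-point log-Chowla input. For $d=0$ one has $a=pk,\ b=qk$ and hence $\lambda(a)\lambda(b)=\lambda(pq)$ identically, giving a surviving contribution $\lambda(pq)\cdot\rho$, where $\rho$ is the logarithmic density of $\{n:\lfloor\alpha_1 n+\beta_1\rfloor=r\lfloor\alpha_2 n+\beta_2\rfloor\}$. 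Since this condition is piecewise constant in the fractional parts, an equidistribution argument shows that $\rho>0$ if and only if the set is not eventually empty, yielding the stated dichotomy.

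The principal obstacle throughout is establishing the Bohr-twisted two-point logarithmic Chowla estimate (and its Hal\'asz-type companion for general bounded multiplicative functions), which powers essentially every step of the argument. This is a genuine strengthening of the logarithmic Matom\"aki--Radziwi{\l}{\l}--Tao theorem: the standard multiplicative-function machinery must be re-run in the simultaneous presence of a linear phase $e(\theta n)$ and a Bohr cutoff $\mathbf{1}[\{\alpha n\}\in I]$, and I expect this to constitute the main technical engine of the paper.
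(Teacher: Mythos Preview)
Your reduction in part~(3) is essentially the paper's argument, and the overall skeleton---the change of variables $m=\lfloor\alpha_1 n+\beta_1\rfloor$, Fourier-approximating the Bohr cutoffs, and identifying a twisted two-point correlation as the core input---matches the paper's strategy.

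The real gap is in how you propose to discharge that core input. The black box you isolate,
\[
\mathbb{E}^{\log}_{m\leq X}\,\lambda(m)\,\lambda(\lfloor\gamma m+\delta_0\rfloor)\,e(\theta m)=o(1),
\]
is \emph{not} a statement one obtains by ``re-running the MRT/Tao machinery'', because the second factor $\lambda(\lfloor\gamma m+\delta_0\rfloor)$ is not a multiplicative function of $m$; the entropy-decrement argument has no traction on it. The paper's key idea, which you do not mention, is to apply the K\'atai--Bourgain--Sarnak--Ziegler orthogonality criterion with $a(m)=f_2(\lfloor\gamma m+\delta_0\rfloor)e(\theta m)$, thereby stripping off the genuinely multiplicative factor $f_1(m)$. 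This reduces matters to
\[
\mathbb{E}^{\log}_{m}\,f_2(\lfloor p\gamma m+\delta_0\rfloor)\,f_2(\lfloor q\gamma m+\delta_0\rfloor)\,e((p-q)\theta m)
\]
for primes $p<q$. Now the two Beatty arguments have \emph{rational} ratio $p/q$, so a short combinatorial lemma writes $\lfloor q\gamma m+\delta_0\rfloor=(q\lfloor p\gamma m+\delta_0\rfloor+r)/p$ on Bohr pieces; substituting $a=\lfloor p\gamma m+\delta_0\rfloor$ yields the honest two-point form $f_2(a)f_2((qa+r)/p)$ over a Bohr set, to which Tao's logarithmic two-point Elliott (plus the Frantzikinakis--Host irrational Elliott for the phase) applies. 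Without this K\'atai step and the rational-ratio reduction, your proposed input is not known and your outline for parts~(1)--(2) does not close.

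A second, smaller gap: your treatment of part~(1) via a single ``Hal\'asz-type theorem'' that directly produces the factorisation $(\mathbb{E}^{\log}f_1)(\mathbb{E}^{\log}f_2)$ does not correspond to any standard result. The paper instead splits: if both $f_i$ are pretentious, one uses Daboussi--Delange almost periodicity to approximate each $f_i$ by periodic functions and then Kronecker--Weyl equidistribution; if one $f_i$ is non-pretentious, its mean is $0$ by Hal\'asz and the correlation is shown to vanish by the part-(2) machinery above. Your sketch conflates these two quite different mechanisms.
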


\textbf{Remarks.}
\begin{itemize}
    \item Note that Theorem~\ref{thm_twopoint} contains the statement that the logarithmic mean of $\lambda(\lfloor \alpha_1 n+\beta_1\rfloor)\lambda(\lfloor \alpha_2 n+\beta_2\rfloor)$ always exists. There are certain trivial examples when the mean value is non-zero (e.g. $\alpha_1 = \alpha_2 = 1$, $\beta_1 = \beta_2 = 0$), and some less trivial examples, e.g. $\alpha_1=\sqrt{2}$, $\alpha_2=2\sqrt{2}$, $\beta_1=0$, $\beta_2=1/4$.

    \item The case of Theorem~\ref{thm_twopoint}(2) where $\beta_i/\alpha_i$ are integers follows as a special case from a result of Frantzikinakis~\cite{Fr20}.
\end{itemize}

A tool for proving Theorem~\ref{thm_twopoint} is an analogue of the two-point logarithmic Elliott conjecture (proved by Tao in~\cite{tao}) where the summation variable is restricted to lie in a Bohr set. For ease of future reference we give the definition of these sets here. 

\begin{definition} Let $d\geq 1$, $\gamma\in \mathbb{R}^d$, and let $U\subset \mathbb{R}^d/\mathbb{Z}^d$ be measurable. Then we call
\begin{align*}
B_d(\gamma,U):=\{x\in \mathbb{Z}:\,\, \gamma x\in U \,\m \mathbb{Z}^d\}    
\end{align*}
an \emph{inhomogeneous Bohr set}.
\end{definition}
\noindent Viewing $[0,1)^d$ as a fundamental domain for $\mathbb{R}^d/\mathbb{Z}^d$, we denote
\begin{align*}
\mathcal{B}_{d,\textnormal{convex}}:=\{B_d(\gamma, U):\,\, \gamma \in \mathbb{R}^d, \, U \subset [0,1)^d, \, U\textnormal{ convex}\}.    
\end{align*}
Write $\mathcal{B}_{\textnormal{convex}}$ for $\bigcup_{d \geqslant 1} B_{d, \textnormal{convex}}$, and for $B \in \mathcal{B}_{ \textnormal{convex}}$ \[ \delta_B := \lim_{X \rightarrow \infty} \E_{n \leqslant X} 1_B(n) = \lim_{X \rightarrow \infty} \E_{n \leqslant X}^{\log} 1_B(n).\] It is a standard result (and follows from Lemma~\ref{le_ratdepend} below, for example) that the natural average $\delta_B$ is well-defined for all $B \in \mathcal{B}_{\textnormal{convex}}$. The equality of logarithmic and natural averages follows from partial summation. 

For stating the next theorem, we also need the notion of pretentious multiplicative functions, introduced in \cite{GS}.

\begin{definition} Let $f:\mathbb{N}\to [-1,1]$ be multiplicative. We say that $f$ is \emph{pretentious} if for some Dirichlet character $\chi$ we have \[ \sum_p \frac{1 - \Re(f(p) \overline{\chi}(p))}{p} < \infty.\]  Otherwise, we say that $f$ is \emph{non-pretentious}.
\end{definition}

The Liouville function is clearly non-pretentious by the prime number theorem in arithmetic progressions.

\begin{theorem}[Logarithmic two-point Elliott over Bohr sets]\label{thm_bohrchowla} Let $f_1,f_2:\mathbb{N}\to [-1,1]$ be multiplicative functions with $f_1$ non-pretentious. Let $B \in \mathcal{B}_{\textnormal{convex}}$. Then, for any $a_1,a_2\in \mathbb{N}$ and $h_1,h_2\in \mathbb{Z}$ satisfying $a_1h_2\neq a_2h_1$, we have
\begin{align*}
\lim_{X\to \infty} \mathbb{E}_{n\leq X}^{\log}f_1(a_1n+h_1)f_2(a_2n+h_2)1_{B}(n)=0.   
\end{align*}
\end{theorem}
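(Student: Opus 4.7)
The plan is to Fourier expand the Bohr-set indicator and reduce the problem to a family of twisted two-point logarithmic Elliott correlations, each of which vanishes under the non-pretentiousness of $f_1$.

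Writing $B = B_d(\gamma, U) \in \mathcal{B}_{d, \textnormal{convex}}$, we have $1_B(n) = 1_U(n\gamma \bmod \mathbb{Z}^d)$. Since $U \subset [0,1)^d$ is convex its boundary has Lebesgue measure zero, so for every $\delta > 0$ we can find trigonometric polynomials $F_-, F_+ : \mathbb{R}^d/\mathbb{Z}^d \to \mathbb{R}$ of some bounded Fourier degree $K = K(\delta)$ with $F_- \leq 1_U \leq F_+$ and $\int (F_+ - F_-) \leq \delta$; a standard construction works by Fej\'er-kernel convolution of a slight thickening of $1_U$. Substituting each sandwich into the logarithmic average and expanding $F_\pm(\theta) = \sum_{|k| \leq K} \widehat{F}_\pm(k)\, e(k \cdot \theta)$ at $\theta = n\gamma$ reduces the theorem, up to an $O(\delta)$ error, to showing that for every $\beta \in \mathbb{R}$
\begin{align*}
T_\beta(X) := \mathbb{E}_{n \leq X}^{\log} f_1(a_1 n + h_1)\, f_2(a_2 n + h_2)\, e(\beta n) \longrightarrow 0 \quad \text{as } X \to \infty.
\end{align*}

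For $\beta = 0$, $T_\beta(X) \to 0$ is the logarithmic two-point Elliott conjecture for the affine linear forms $a_1 n + h_1, a_2 n + h_2$ under the non-degeneracy $a_1 h_2 \neq a_2 h_1$ and the non-pretentiousness of $f_1$; this is a theorem of Tao~\cite{tao} in its affine-form extension. For rational $\beta = p/q$ one partitions $n$ by its residue $r$ modulo $q$, on which the phase $e(\beta n) = e(pr/q)$ is a fixed root of unity, and then the substitution $n = qm + r$ reduces $T_{p/q}(X)$ to a linear combination of untwisted logarithmic Elliott averages over the rescaled forms $a_i q m + (a_i r + h_i)$, which still satisfy the preserved non-degeneracy $a_1 h_2 - a_2 h_1 \neq 0$. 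The main work is the irrational case, where I would invoke a twisted form of Tao's theorem: that $T_\beta(X) = o(1)$ for all $\beta \in \mathbb{R}$, obtained by threading the linear phase $e(\beta n)$ (a degree-one nilsequence) through Tao's entropy decrement argument and the short-interval Hal\'asz bounds of Matom\"aki--Radziwi\l{}\l{}--Tao, noting that multiplication of $f_1$ by $e(\beta n)$ does not produce any new pretentious obstruction in the real-valued bounded setting.

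The main obstacle I foresee is this twisted two-point Elliott estimate for irrational $\beta$: the proof in~\cite{tao} is written for untwisted correlations, and one has to verify that the key inputs (especially the short-interval Hal\'asz bounds and the entropy decrement step) remain sufficiently uniform in the presence of the phase $e(\beta n)$. Once this is in hand, summing the resulting bound over the finitely many frequencies $k \in \mathbb{Z}^d$ with $|k| \leq K$ in the truncated Fourier expansion and then sending $\delta \to 0$ completes the proof.
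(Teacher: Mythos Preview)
Your overall strategy---Fourier-approximate $1_B$ and reduce to twisted two-point logarithmic correlations $T_\beta(X)$---is exactly the paper's approach, and your treatment of rational $\beta$ matches the paper's. The difference is in how the irrational-$\beta$ case is handled.

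You propose to re-run Tao's entropy decrement argument with the phase $e(\beta n)$ carried along, and you correctly flag this as the main obstacle. The paper avoids this entirely: after a routine reduction to completely multiplicative $f_i$ (write $f_i = \widetilde{f_i} * g_i$ and truncate the $g_i$-sum) and a change of variables taking $(a_1n+h_1,\,a_2n+h_2)$ to $(m+h_1',\,m+h_2')$, it invokes the \emph{irrational logarithmic Elliott conjecture} of Frantzikinakis--Host~\cite{fh-IMRN}, which states that for any irrational $\gamma$ and any bounded multiplicative $f_1,\ldots,f_k$,
\[
\lim_{X\to\infty}\mathbb{E}_{n\leq X}^{\log}\, e(\gamma n)\prod_{j}f_j(n+h_j)=0,
\]
with no non-pretentiousness hypothesis needed. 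This is a black-box result (proved by structural methods rather than by modifying Tao's argument), so your speculative step is unnecessary. What you gain by citing it is a clean proof; what the paper's route costs is the reduction to $a_1=a_2$ and to completely multiplicative functions, which is a few lines.

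One smaller gap: your sandwich $F_-\leq 1_U\leq F_+$ with $\int_{\mathbb{T}^d}(F_+-F_-)\leq\delta$ only controls $\mathbb{E}_{n\leq X}^{\log}(F_+-F_-)(n\gamma)$ if $n\gamma$ equidistributes in the \emph{full} torus $\mathbb{T}^d$, which fails when the coordinates of $\gamma$ have rational dependencies. The paper handles this by first passing (Lemma~\ref{le_ratdepend}) to a lower-dimensional torus in which equidistribution does hold, at the cost of a periodic factor absorbed into the rational-phase case. You would need a similar reduction.
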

\noindent We note that the case where $a_1=a_2=1$ and $B=B_d(\gamma,U)$ with $d=1$, and $U$ an interval essentially follows from~\cite{CHRST}. Indeed our methods are broadly similar to those from the excellent paper~\cite{CHRST} (though we were working independently from those authors). A few additional technical results are needed to prove Theorem~\ref{thm_bohrchowla}, to handle the rational dependencies that can arise when $d \geqslant 2$.

When $k \geqslant 3$, we have the following ``99\% version'' of Conjecture~\ref{conj_frantz}.

\begin{theorem}[99\% result for $k$-point correlations]
\label{thm_hom99}
Let $k \geqslant 3$ be an integer, and let $(\alpha_1,\alpha_2,\dots,\alpha_k):= \alpha \in \mathbb{R}_{>0}^k \setminus \mathbb{Q}^k$. \begin{enumerate}
    \item Suppose that $1,\alpha_1,\ldots, \alpha_k$ are linearly independent over $\mathbb{Q}$. Then there is some $\eta >0$ (depending on the $\alpha_i$'s) such that for any multiplicative functions $f_1,\ldots f_k:\mathbb{N}\to [-1,1]$ we have \begin{equation}
\label{eq_zerobetaconv}
\limsup_{X\to \infty} \Big\vert \E_{n\leqslant X}^{\log} \prod\limits_{i=1}^k f_i(\lfloor \alpha _i n \rfloor)-\prod_{i=1}^k \E_{n\leqslant X}^{\log} f_i(n)\Big\vert \leqslant 1 - \eta.
 \end{equation}
 \item Suppose that $\mathcal{V}$ is a nonempty maximal linearly independent set of vectors $v \in \mathbb{Z}^k$ for which $v \cdot \alpha \in \mathbb{Z}$ for all $v\in \mathcal{V}$. Suppose also that there exists a vector $(w_1,\dots,w_k) := w \in \mathbb{R}_{> 0}^k$ such that:
\begin{itemize}
\item $v \cdot w = 0$ for all $v\in \mathcal{V}$;
\item $w_1$ is the unique maximal coefficient of $w$.
\end{itemize}
Then there is some $\eta >0$ (depending on the $\alpha_i$'s) such that for any multiplicative non-pretentious function $f_1:\mathbb{N}\to [-1,1]$ and completely multiplicative functions $f_2,\dots,f_k: \mathbb{N} \to [-1,1]$ we have~\eqref{eq_zerobetaconv}. In particular, we have 
 \begin{equation*}
\limsup_{X\to \infty} \Big\vert \E_{n\leqslant X}^{\log} \prod\limits_{i=1}^k \lambda(\lfloor \alpha _i n \rfloor)\Big\vert \leqslant 1 - \eta.
 \end{equation*}
\end{enumerate}
\end{theorem}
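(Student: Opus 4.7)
The plan is to reduce the $k$-point Beatty-sequence correlation to an aggregate of two-point correlations to which Theorem~\ref{thm_bohrchowla} can be applied, using a change of variables that singles out the ``dominant'' coordinate $\lfloor \alpha_1 n\rfloor$. In both parts of the theorem, the first index is distinguished: in part (1) we may assume (after relabelling) that $\alpha_1 = \max_i \alpha_i$, while in part (2) the hypothesis that $w_1$ is the unique largest coordinate of $w$ plays the analogous role, ensuring that $\alpha_1 n$ dominates modulo the rational structure encoded by $\mathcal{V}$.

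The first step is the change of variables $m = \lfloor \alpha_1 n\rfloor$, performed after a harmless common dilation $\alpha \mapsto c\alpha$ to ensure $\alpha_1 > 1$. This is a near-bijection whose image is a Bohr set $B_0 \in \mathcal{B}_{1,\textnormal{convex}}$ of density $1/\alpha_1 < 1$, and under it one has $\lfloor \alpha_i n\rfloor = \lfloor(\alpha_i/\alpha_1)m + \theta_i\rfloor$ for $i \geq 2$, where $\theta_i$ is a bounded function of $\{m/\alpha_1\}$. Next I would decompose the torus $[0,1)^{k-1}$, parametrised by the coordinates $(\{(\alpha_i/\alpha_1)m\})_{i \geq 2}$, into sufficiently fine boxes so that on each resulting Bohr-set fibre $F$ the quantity $\lfloor(\alpha_i/\alpha_1)m + \theta_i\rfloor$ equals $a_{i,F} m + h_{i,F}$ for integers $a_{i,F}, h_{i,F}$ depending only on $F$. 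The complete multiplicativity of $f_2,\dots,f_k$ in part (2) then makes each $f_i(a_{i,F} m + h_{i,F})$ a bounded multiplicative-type function of $m$ along an arithmetic progression, so the correlation decomposes as a finite linear combination of terms of the form $\mathbb{E}^{\log}_m f_1(m) \prod_{i \geq 2} f_i(a_{i,F} m + h_{i,F}) 1_{B_F}(m)$ with $B_F \in \mathcal{B}_{\textnormal{convex}}$.

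The third step applies Theorem~\ref{thm_bohrchowla} to each of these ``off-diagonal'' terms (for which some $a_{i,F} h_{1,F} \neq a_{1,F} h_{i,F}$), pairing $f_1$ (non-pretentious in part (2)) with the product of the $f_i(a_{i,F}\cdot + h_{i,F})$, to obtain $o(1)$ cancellation. In part (1), where non-pretentiousness is not assumed, I would case-split: if some $f_j$ is non-pretentious, relabel and proceed as in part (2); if all $f_i$ are pretentious, the product $\prod f_i(\lfloor \alpha_i n\rfloor)$ is, up to an error, a product of (real) Dirichlet characters evaluated at Beatty sequences, whose limit is computed directly via Weyl equidistribution of $(\lfloor \alpha_i n\rfloor \bmod q)_i$ under the rational independence assumption.

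The main obstacle I expect is quantitative: Theorem~\ref{thm_bohrchowla} gives $o(1)$ cancellation fibre-by-fibre, but to obtain a uniform $1 - \eta$ bound (with $\eta$ depending only on $\alpha$ and not on the $f_i$) one must bound the contribution of the ``diagonal'' and boundary fibres by something strictly less than $1$ in an $L^\infty$ sense. The strict inequality $\delta_{B_0} = 1/\alpha_1 < 1$, coming from the maximality of $\alpha_1$ (respectively, the uniqueness of $w_1$), ultimately produces this density deficit $\eta > 0$; but uniformly propagating the deficit through the Fourier truncation, whose scale cannot be sent to infinity while preserving the $o(1)$ control, is the delicate step and explains why the result is only a ``99\%'' bound rather than a full $o(1)$ statement as conjectured in Conjecture~\ref{conj_frantz}.
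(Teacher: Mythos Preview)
Your central decomposition step does not work. You claim that, after restricting $m$ to a Bohr-set fibre $F$ on which the fractional parts $(\{(\alpha_i/\alpha_1)m\})_{i\geq 2}$ lie in a small box, one has $\lfloor(\alpha_i/\alpha_1)m+\theta_i\rfloor = a_{i,F}m+h_{i,F}$ with \emph{integer} $a_{i,F},h_{i,F}$. That is false whenever $\alpha_i/\alpha_1$ is irrational (which it always is in part~(1)): constraining the fractional part of $(\alpha_i/\alpha_1)m$ to a short interval says nothing about the behaviour of $\lfloor(\alpha_i/\alpha_1)m\rfloor$ as a function of $m$ beyond a single additive shift, and in particular does not linearise it. After your change of variables you are left with a genuine $k$-point Beatty correlation $f_1(m)\prod_{i\geq 2}f_i(\lfloor(\alpha_i/\alpha_1)m+\theta_i\rfloor)$ restricted to a Bohr set, which is exactly the object you started with. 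Even had the linearisation succeeded, Theorem~\ref{thm_bohrchowla} applies only to two multiplicative functions, and the product $\prod_{i\geq 2}f_i(a_i m+h_i)$ is not multiplicative in $m$, so ``pairing $f_1$ with the product'' is not a legal move.

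The paper's argument is structurally different and you should see where the saving actually comes from. One does \emph{not} try to linearise the Beatty sequences. Instead, one fixes a prime $r$ and restricts $n$ to a Bohr set $B_r$ on which the fractional parts $(\{\alpha_i n\})_i$ are arranged so that $\lfloor\alpha_1 r^2 n\rfloor=r\lfloor\alpha_1 rn\rfloor+1$ while $\lfloor\alpha_i r^2 n\rfloor=r\lfloor\alpha_i rn\rfloor$ for $i\geq 2$. If the $k$-fold product were within $\eta$ of a sign $u$ on almost all $n$, then comparing the product at $rn$ and at $r^2 n$ and using (complete) multiplicativity, all factors $f_2,\ldots,f_k$ cancel identically, leaving $f_1(\lfloor\alpha_1 rn\rfloor)f_1(r\lfloor\alpha_1 rn\rfloor+1)\approx 1$ on a set of positive logarithmic density $\delta_{B_r}$. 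That is a genuine two-point correlation of $f_1$ over a Bohr set, and Lemma~\ref{le_usedeverywhere} (equivalently Theorem~\ref{thm_bohrchowla}) shows it tends to $0$, a contradiction. The $\eta$ in the final statement is the density $\delta_{B_r}>0$ of this special Bohr set, not a deficit coming from $1/\alpha_1<1$ as you suggest; your explanation of the ``99\%'' barrier is accordingly off the mark. In part~(2), the role of the vector $w$ with unique maximal coordinate $w_1$ is precisely to guarantee that the analogous Bohr set (inside the subtorus determined by $\mathcal{V}$) is nonempty, hence of positive density.
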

\noindent We stress that in Theorem \ref{thm_hom99}(2) the first condition is indeed $v \cdot w = 0$ as an element of $\mathbb{R}$, and is not a shorthand for $v \cdot w \in \mathbb{Z}$ (as is sometimes the convention).

Theorem \ref{thm_hom99}(1) deals with the case when $1,\alpha_1,\dots,\alpha_k$ are linearly independent over $\mathbb{Q}$. At the opposite extreme, when the $\alpha_i$'s are as rationally dependent as possible, we can also show some cancellation. 

\begin{corollary}
\label{thm_ratmult}
Let $k \geqslant 3$, and let $\alpha_1,\dots,\alpha_k >0$ be distinct with $\max(\alpha_1,\dots,\alpha_k) = \alpha_1$. Suppose that there is some irrational $\beta$ such that $\alpha_i/\beta \in \mathbb{Q}$ for all $i$. Then there is some $\eta >0$ (depending on the $\alpha_j$'s) such that, for any multiplicative functions $f_1,\ldots f_k:\mathbb{N}\to [-1,1]$ with $f_1$ non-pretentious and $f_2,\dots,f_k$ completely multiplicative, we have~\eqref{eq_zerobetaconv}.
\end{corollary}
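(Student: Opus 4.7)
The plan is to deduce Corollary~\ref{thm_ratmult} directly from Theorem~\ref{thm_hom99}(2) by verifying that the hypotheses of that theorem hold in the present setting. Replacing $\beta$ by $-\beta$ if $\beta<0$ (which preserves both its irrationality and the assumption $\alpha_i/\beta\in\mathbb{Q}$), I may assume $\beta>0$; writing $\alpha_i = r_i\beta$ then yields $r_i \in \mathbb{Q}_{>0}$ for each $i$.

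First I would identify the lattice of $v\in \mathbb{Z}^k$ with $v\cdot\alpha\in\mathbb{Z}$. For any such $v$ one has $v\cdot\alpha = (v\cdot r)\beta$, where $r := (r_1,\dots,r_k)$. Since $v\cdot r$ lies in $\mathbb{Q}$ while $\beta$ is irrational, the condition $v\cdot\alpha\in\mathbb{Z}$ is equivalent to $v\cdot r = 0$. Hence the relevant vectors form the lattice of integer points in the rational hyperplane $\{x\in\mathbb{R}^k : x\cdot r = 0\}$, which has rank $k-1 \geqslant 2$; a maximal linearly independent subset $\mathcal{V}$ of this lattice therefore exists, is nonempty, and spans the hyperplane over $\mathbb{Q}$.

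Next I would produce the vector $w$ required by Theorem~\ref{thm_hom99}(2). Asking that $v\cdot w = 0$ for every $v\in\mathcal{V}$ is equivalent to $w$ being orthogonal to the entire hyperplane, which forces $w = \mu r$ for some $\mu\in\mathbb{R}$. The choice $w := r$ lies in $\mathbb{R}_{>0}^k$; and since the $\alpha_i$ are distinct with $\alpha_1$ maximal, dividing through by $\beta>0$ gives $w_1 = r_1 > r_i = w_i$ for all $i\neq 1$, so $w_1$ is the unique maximal coefficient of $w$. Both hypotheses of Theorem~\ref{thm_hom99}(2) are therefore met, and the bound~\eqref{eq_zerobetaconv} follows at once.

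The entire argument is a verification rather than a new estimate: all the analytic work sits in Theorem~\ref{thm_hom99}(2) itself, and the only delicate point is the collapse of $v\cdot\alpha\in\mathbb{Z}$ to $v\cdot r = 0$, which uses the irrationality of $\beta$ essentially (a nonzero rational multiple of $\beta$ cannot be an integer). Consequently there is no substantive analytic obstacle; the role of the hypotheses ``$\alpha_1$ strictly maximal'' and ``$\beta$ irrational'' in Corollary~\ref{thm_ratmult} is precisely to supply the positivity of $w$ and the uniqueness of its largest coordinate.
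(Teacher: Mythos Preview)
Your proof is correct and follows essentially the same approach as the paper: write $\alpha_i = r_i\beta$ with $r_i\in\mathbb{Q}_{>0}$, observe that $v\cdot\alpha\in\mathbb{Z}$ forces $v\cdot r=0$ by the irrationality of $\beta$, and then take $w=r$ in Theorem~\ref{thm_hom99}(2). The paper's proof is slightly terser (it does not explicitly normalise $\beta>0$ or compute the rank of the lattice), but the substance is identical.
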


\begin{proof}
Write $\alpha = (\alpha_1,\dots,\alpha_k)$, and for $i$ in the range $1\leq i\leq k$ let $\alpha_i=q_i\beta$ (for some $q_i\in \mathbb{Q}_{>0}$). The $q_i$ are distinct. Now apply Theorem~\ref{thm_hom99}(2), taking $w=(q_1,\ldots, q_k)$. This is an admissible choice, since $v\cdot \alpha\in \mathbb{Z}$ for $v\in \mathcal{V}$ implies $v\cdot (q_1,\ldots, q_k)=0$. 
\end{proof}
For example, when $k=4$ we have results for tuples $(\alpha_1,\alpha_2,\alpha_3,\alpha_4)$ such as 
\begin{itemize}
\item $(\sqrt{2}, \sqrt{3}, \sqrt{5}, \sqrt{7})$ (rationally independent);
\item  $(\sqrt{2}, \sqrt{2} + \sqrt{3}, \sqrt{2} + 2\sqrt{3}, \sqrt{2} + 3 \sqrt{3})$ (take $\mathcal{V} = \{(1,-2,1,0), (0,1,-2,1)\}$ and $w = (1,2,3,4)$, say); and 
\item $(\sqrt{2}, 2\sqrt{2}, 3 \sqrt{2}, 4 \sqrt{2})$ (take $w = (1,2,3,4)$ again). 
\end{itemize} But our methods cannot handle the tuple $(\alpha_1,\alpha_2,\alpha_3,\alpha_4) = (\sqrt{2}, \sqrt{2}+1, \sqrt{3}, \sqrt{3}+1)$, at least not without the injection of some further ideas.

Theorem~\ref{thm_hom99} is proved by a rather simple argument. After handling the case of pretentious $f_i$ by almost periodicity of such functions, we restrict $n$ to a suitably chosen Bohr set and then replace $n$ by a multiple $rn$ that reduces the $k$-point correlation to a $2$-point correlation. From this, Theorem~\ref{thm_twopoint} can be applied. 

The main challenge is establishing that the Bohr set is non-empty, and this leads to the various conditions in Theorem \ref{thm_hom99}(2). The requirement that the functions $f_2,\dots,f_k$ are completely multiplicative (rather than merely multiplicative) can be relaxed to the assumption that $f_2,\dots,f_k$ are completely multiplicative at a single common prime. However we have not been able to prove Theorem \ref{thm_hom99}(2) for functions that are only assumed to be multiplicative. 

We also prove the following extension of the ``99\% Elliott conjecture'' due to the first author~\cite{teravainen}.

\begin{theorem}[99\% Elliott over Bohr sets]
\label{thm_highercorrelationsandBohr}
Let $k \geqslant 3$, and let $a_1,\dots,a_k \in \mathbb{N}$ and $h_1,\dots,h_k\in \mathbb{Z}$ with $a_i h_j - a_jh_i \neq 0$ for all $i \neq j$. Let $B \in \mathcal{B}_{\textnormal{convex}}$. Then there is some $\eta >0$ for which the following holds. For any multiplicative functions $f_1,f_2,\dots f_k:\mathbb{N}\to [-1,1]$ with $f_1$ non-pretentious, \[ \limsup_{X\to \infty}\Big\vert\E_{n \leqslant X}^{\log} 1_{B}(n)\prod\limits_{i=1}^k f_i(a_in + h_i) \Big\vert \leqslant \delta_B(1 - \eta).\] 
\end{theorem}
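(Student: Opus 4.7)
The plan is to adapt the proof of the unweighted 99\% logarithmic Elliott theorem of \cite{teravainen} by carrying the Bohr weight $1_B(n)$ through the argument and substituting Theorem \ref{thm_bohrchowla} in place of Tao's two-point Elliott theorem wherever the latter is invoked. The starting observation is that $B \in \mathcal{B}_{\textnormal{convex}}$ interacts well with integer shifts: $1_B(n+H) = 1_{B_H}(n)$ for an explicit $B_H \in \mathcal{B}_{\textnormal{convex}}$, and the intersection $B \cap B_H$ again lies in $\mathcal{B}_{\textnormal{convex}}$, with $\delta_{B \cap B_H} \geq c \delta_B^2$ on average over $H$ in a short range, by a standard volume argument for convex bodies on the torus.

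First I would dispose of the case where all of $f_2, \ldots, f_k$ are pretentious. By the Hal\'asz--Wirsing theorem each such $f_i$ is close in logarithmic mean to $\chi_i(n) n^{it_i}$ for some Dirichlet character $\chi_i$ and some real $t_i$. These almost-periodic factors can be absorbed into an enlarged Bohr weight of $\mathcal{B}_{\textnormal{convex}}$ type (after splitting $\chi_i$ into residue classes and encoding the archimedean twists $n^{it_i}$ via a logarithmic change of variables), reducing the problem to a single-function mean value of $f_1$ on a Bohr set, which vanishes in the limit by the orthogonality of non-pretentious multiplicative functions against almost-periodic weights.

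When some $f_i$ with $i \geq 2$ is non-pretentious, I would apply the Cauchy--Schwarz/$TT^*$ reduction from \cite{teravainen}, which bounds the $k$-point correlation by an average over short shifts $H$ of two-point correlations of $f_1$ along the pair $(a_1 n + h_1, a_1(n + H) + h_1)$, weighted by $1_{B \cap B_H}(n)$. The Bohr weight survives the reduction because it translates into another Bohr weight. Applying Theorem \ref{thm_bohrchowla} to each such intersected two-point correlation yields $o(1)$ for every fixed nonzero $H$, and the quantitative gain $\delta_B(1-\eta)$ is then extracted by the same scheme as in \cite{teravainen}, using the lower bound on $\delta_{B \cap B_H}$ recorded above to keep the Bohr densities away from zero.

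The main technical obstacle is the pretentious absorption step: one must encode the characters $\chi_i$ and the archimedean twists $n^{it_i}$ as additional Bohr coordinates while preserving the convexity of the fundamental domain $U$ required by $\mathcal{B}_{\textnormal{convex}}$. The archimedean part requires a logarithmic substitution that introduces some distortion, and the character part requires a decomposition into residue classes each of which must be realized by a Bohr-type weight. A secondary point is ensuring that the error terms in the Hal\'asz approximation can be made uniform over the Bohr-set-restricted arithmetic progressions appearing in this decomposition.
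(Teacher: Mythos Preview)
Your proposal takes a genuinely different route from the paper, and the central step in your non-pretentious case does not work as stated.

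The paper's proof is much shorter and treats $1_B$ as the object to be decomposed, rather than re-opening the argument of \cite{teravainen}. Via Lemma~\ref{le_approximation} one writes $1_B(n)=T_\varepsilon(n)+\sum_{a\leq q}t_a 1_{n\equiv a\pmod q}+\mathcal{E}_\varepsilon(n)$, where $T_\varepsilon$ is a trigonometric polynomial with \emph{irrational} phases, the $t_a\geq 0$ satisfy $q^{-1}\sum_a t_a=\delta_B+O(\varepsilon)$, and $\mathcal{E}_\varepsilon$ is $O(\varepsilon)$ in $L^1$. On the $T_\varepsilon$-part one applies Lemma~\ref{le_frantzikinakis}(3) (the Frantzikinakis--Host irrational Elliott bound), giving $o(1)$. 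On the periodic part one parametrises each progression $n\equiv a\pmod q$ and applies the unweighted 99\% Elliott bound (Lemma~\ref{le_tao_fh}(2)) as a black box, obtaining $\leq (1-2\eta)\cdot q^{-1}\sum_a t_a=(1-2\eta)\delta_B+O(\varepsilon)$. No case-splitting on $f_2,\ldots,f_k$ and no shift-averaging are involved.

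The gap in your plan is the ``Cauchy--Schwarz/$TT^*$ reduction from \cite{teravainen}'' step. No such reduction exists there: a van~der~Corput or $TT^*$ argument applied to $\prod_{i\leq k} f_i(a_in+h_i)$ produces a $2k$-point correlation $\prod_i f_i(a_in+h_i)\overline{f_i(a_i(n+H)+h_i)}$, not a two-point correlation of $f_1$ alone. The functions $f_2,\ldots,f_k$ do not vanish under Cauchy--Schwarz unless you bound them trivially, which forfeits any saving. The proof in \cite{teravainen} instead goes through an entropy-decrement/isotopy argument, and it is not clear that this can be made to carry an arbitrary Bohr weight without essentially reproving the result. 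Even if one could thread the weight through, you would be duplicating work that the paper avoids entirely by the Fourier decomposition of $1_B$.

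A minor point: in your pretentious case, since the $f_i$ are real-valued, no archimedean twists $n^{it_i}$ arise (cf.\ \cite[Lemma~C.1]{MRT}), so the ``logarithmic substitution'' you worry about is unnecessary. The pretentious $f_i$ are genuinely almost periodic (Daboussi--Delange), and that part of your plan could be made to work, but it is superseded by the paper's uniform treatment.
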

\noindent This result is not needed in the proof of Theorem~\ref{thm_hom99}, however. 

\subsection{Acknowledgements}
The majority of the work for this note was done in the first half of 2021, partly when both authors were Junior Fellows at the Number Theory programme at Institut Mittag-Leffler (working remotely). JT was supported by a Titchmarsh Fellowship, Academy of Finland grant no. 340098, a von Neumann Fellowship (NSF grant \texttt{DMS-1926686}),  and funding from European Union's Horizon
Europe research and innovation programme under Marie Sk\l{}odowska-Curie grant agreement No
101058904. AW was supported by a Junior Research Fellowship at Trinity College Cambridge. 

We thank Nikos Frantzikinakis for helpful comments.

\section{Notation and some preliminaries}

As usual, we denote $e(\theta):=e^{2\pi i\theta}$. We use standard Landau and Vinogradov asymptotic notation $O(\cdot), o(\cdot), \ll, \gg$. To clarify a couple of points, a function denoted by $o_c(1)$ will tend to zero as $X \rightarrow \infty$ with the parameter $c$ fixed. A function denoted by $o_{P \rightarrow \infty}(1)$ is a function that tends to zero as $P \rightarrow \infty$ (with all other parameters fixed). 

We say that a sequence $(a(n))_{n\in \mathbb{N}}$ taking values in a $d$-dimensional torus $T$ is \emph{equidistributed} if 
\begin{align}\label{eq_equidistribute}
 \lim_{X\to \infty}\frac{1}{X}\sum_{n\leq X}F(a(n))=\int_{T} F\d\mu,  
\end{align}
for all continuous functions $F:T\to \mathbb{C}$, where $\mu$ is the Haar measure on $T$. We say that $(a(n))_{n\in \mathbb{N}}$ is \emph{totally equidistributed} if $(a(qn+b))_{n\in \mathbb{N}}$ is equidistributed for all $q,b\in \mathbb{N}$. It is well known (see~\cite[Proposition 1.1.2]{tao12}) that~\eqref{eq_equidistribute} is equivalent to the same statement holding for all $f$ of the form $1_{U}$, where $U\subset T$ is an open set whose boundary has measure zero.  

We shall frequently use (sometimes without further mention) the Kronecker--Weyl theorem, which states that  for $\alpha \in \mathbb{R}^d/\mathbb{Z}^d$ the sequence $(\alpha n)_{n\in \mathbb{N}}$ equidistributes in the torus $\mathbb{R}^d/\mathbb{Z}^d$ if and only if $k\cdot \alpha\not \in \mathbb{Z}$ for all $k\in \mathbb{Z}^d$.

We endow $\mathbb{R}^d/\mathbb{Z}^d$ with the usual metric $\Vert x - y\Vert_{\mathbb{R}^d/\mathbb{Z}^d} = \min_{z \in \mathbb{Z}^d} \vert x - y- z\vert$. A function $F: \mathbb{R}^d/\mathbb{Z}^d \longrightarrow \mathbb{C}$ is Lipschitz, with Lipschitz constant $c\in \mathbb{R}_{ \geqslant 0}$, if $c = \sup_{\substack{ x,y \in \mathbb{R}^d/\mathbb{Z}^d \\ x \neq y}} \frac{ \vert F(x) - F(y)\vert}{\Vert x - y\Vert_{\mathbb{R}^d/\mathbb{Z}^d}}.$

\section{Decomposition of Bohr sets}

The goal of this section is to prove Lemma~\ref{le_approximation}, a result on Fourier approximations of Bohr sets in $\mathcal{B}_{\textnormal{convex}}$. Such a result is surely standard, but we couldn't find exactly the statement we needed in an easily citable form. 

We begin with a lemma to deal with possible rational dependencies between the coordinates of the phase. 
\begin{lemma}[Removing rational dependencies]
\label{le_ratdepend}
Let $d \geqslant 1$, and let $B_d(\gamma,U)$ be an inhomogeneous Bohr set with $\gamma \notin \mathbb{Q}^d$. Then there is an integer $d^\prime$ in the range $1 \leqslant d^\prime \leqslant d$, a vector $(\rho_1,\dots,\rho_{d^\prime})^T = \rho \in \mathbb{R}^{d^{\prime}}$ for which $1,\rho_1,\dots \rho_{d^\prime}$ are linearly independent over $\mathbb{Q}$, an integer $q \geqslant 1$, and measurable sets $U^\prime(1),\ldots, U^\prime (q) \subset [0,1)^{d^\prime}$ for which \[ 1_{B_d(\gamma,U)}(n) = 1_{B_{d^\prime}(\rho, U^\prime(n \Mod q))}(n).\] Furthermore there is a constant $C(\gamma)$ such that, if $U \subset [0,1)^d$ is convex, each set $U^\prime(a)$ is a disjoint union of at most $C(\gamma)$ convex sets. Finally, \[ \frac{1}{q} \sum\limits_{a \leqslant q} \vol(U^\prime(a)) = \delta_{B_d(\gamma,U)}.\] 
\end{lemma}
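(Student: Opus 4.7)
The plan is to absorb all $\mathbb{Q}$-linear dependencies among $1, \gamma_1, \ldots, \gamma_d$ into the residue class of $n$ modulo an integer $q$. Since $\gamma \notin \mathbb{Q}^d$, the $\mathbb{Q}$-span of $\{1, \gamma_1, \dots, \gamma_d\}$ has dimension $d'+1$ with $1 \leqslant d' \leqslant d$. I would begin by choosing a $\mathbb{Q}$-basis $\{1, \sigma_1, \ldots, \sigma_{d'}\}$ of this span (picking the $\sigma_j$ among the $\gamma_i$ for definiteness) and writing $\gamma_i = r_i + \sum_{j=1}^{d'} c_{ij} \sigma_j$ with $r_i, c_{ij} \in \mathbb{Q}$. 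Letting $q$ be a common denominator for all these rationals, the crucial rescaling is to set $\rho_j := \sigma_j / q$, so that
\[ \gamma_i = r_i + \sum_{j=1}^{d'} a_{ij} \rho_j, \qquad a_{ij} := q c_{ij} \in \mathbb{Z}, \quad q r_i \in \mathbb{Z}.\]
The vectors $1, \rho_1, \ldots, \rho_{d'}$ remain $\mathbb{Q}$-linearly independent, since a rational relation $c_0 + \sum c_j \rho_j = 0$ gives $c_0 + \sum (c_j/q)\sigma_j = 0$ and forces all coefficients to vanish.

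Letting $A = (a_{ij}) \in M_{d\times d'}(\mathbb{Z})$ and $r = (r_i) \in \mathbb{Q}^d$, one has for every $n \in \mathbb{Z}$
\[ \gamma n \equiv r \cdot (n \bmod q) + A(\rho n) \pmod{\mathbb{Z}^d},\]
since $qr \in \mathbb{Z}^d$ (killing the dependence of $rn$ on $n$ beyond its residue mod $q$) and $A$, being an integer matrix, descends to a well-defined map $\mathbb{R}^{d'}/\mathbb{Z}^{d'} \to \mathbb{R}^d/\mathbb{Z}^d$. I would therefore define, for each $a \in \{0,1,\dots,q-1\}$,
\[ U'(a) := \{\, y \in [0,1)^{d'} : A y + r a \in U \bmod \mathbb{Z}^d \,\},\]
and the identity $1_{B_d(\gamma, U)}(n) = 1_{B_{d'}(\rho, U'(n \bmod q))}(n)$ follows immediately.

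For the convexity assertion I would decompose
\[ U'(a) = \bigsqcup_{k \in \mathbb{Z}^d} \bigl(A^{-1}(U - ra - k) \cap [0,1)^{d'}\bigr).\]
This union is disjoint because $U \subset [0,1)^d$ meets each $\mathbb{Z}^d$-coset at most once. When $U$ is convex, each nonempty piece is the intersection of a linear preimage of a convex set with the convex box $[0,1)^{d'}$, hence convex. Moreover, a piece can be nonempty only if $k$ lies in the bounded set $U - A[0,1)^{d'} - ra \subset \mathbb{R}^d$, whose diameter is controlled uniformly in $a$ by a constant depending only on $\gamma$; this produces the required bound $C(\gamma)$ on the number of convex pieces.

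For the density formula I would split the average over $n \leqslant X$ into residue classes modulo $q$, writing $n = qm + a$. Then $\rho n \equiv (q\rho) m + \rho a \pmod{\mathbb{Z}^{d'}}$, and $1, q\rho_1, \ldots, q\rho_{d'}$ are also $\mathbb{Q}$-linearly independent (by the same argument used for $\rho$), so the Kronecker--Weyl theorem gives equidistribution of $((q\rho) m + \rho a)_m$ in $\mathbb{R}^{d'}/\mathbb{Z}^{d'}$. Applied to the Jordan-measurable set $U'(a)$, this yields $\delta_{B_d(\gamma,U)} = \frac{1}{q}\sum_{a=0}^{q-1} \vol(U'(a))$. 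I don't expect a serious obstacle anywhere in the proof, which is essentially bookkeeping; the one delicate point is the rescaling $\rho_j = \sigma_j/q$, without which the coefficients $c_{ij}$ would be non-integral, the matrix action would not descend to the torus, and the clean identity $1_{B_d(\gamma,U)}(n) = 1_{B_{d'}(\rho,U'(n\bmod q))}(n)$ would fail.
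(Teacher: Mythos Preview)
Your proof is correct and takes a genuinely different route from the paper's. The paper invokes the abelian Ratner theorem to split $\gamma=\gamma'+\gamma''$ with $\gamma''\in\mathbb{Q}^d$ and $\gamma' n$ totally equidistributed on a subtorus $T\leq\mathbb{T}^d$, and then applies an $SL_d(\mathbb{Z})$ change of coordinates to map $T$ onto a standard coordinate subtorus; the $\rho$ arises as (the first $d'$ coordinates of) $M\gamma'$. Your argument instead works from the bottom up: you select a $\mathbb{Q}$-basis $\{1,\sigma_1,\dots,\sigma_{d'}\}$ directly, rescale to force integer coefficients, and encode the map $\mathbb{T}^{d'}\to\mathbb{T}^d$ by an explicit integer matrix $A$. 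This is more elementary (no Ratner, no $SL_d(\mathbb{Z})$), and the rescaling $\rho_j=\sigma_j/q$ is exactly the device that makes it work. The paper's approach has the advantage of being coordinate-free and giving a cleaner geometric picture of $U'(a)$ as (the $M$-image of) a slice $(U-n\gamma'')\cap T$, which makes the convexity count slightly more transparent; your approach has the advantage of being self-contained and making the density computation via Kronecker--Weyl completely explicit (indeed, the paper's proof does not spell out the final density identity at all). One small remark: your claim that $U'(a)$ is Jordan-measurable, needed for the equidistribution step in the density formula, relies on $U$ being convex (so that $U'(a)$ is a finite union of convex sets); this is the only case in which $\delta_{B_d(\gamma,U)}$ is asserted to exist, so there is no issue.
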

\begin{proof}
By the abelian Ratner's theorem of~\cite[Proposition 1.1.5]{tao12} we may write $\gamma = \gamma^\prime + \gamma^{\prime\prime}$ where $\gamma^{\prime\prime} \in \mathbb{Q}^d$ and $\gamma^\prime n$ mod $\mathbb{Z}^d$ totally equidistributes in some subtorus $T \leqslant \mathbb{R}^d/\mathbb{Z}^d$. Let $d^\prime := \dim T$, noting that $d^\prime \geqslant 1$ (since $\gamma \notin \mathbb{Q}^d$ by assumption). 

Let $q \in \mathbb{N}$ be minimal such that $q \gamma^{\prime\prime} \in \mathbb{Z}^d$. Define $U_1(n)$ to be the representative of $(U - n \gamma^{\prime\prime}) \cap T$ mod $\mathbb{Z}^d$ in the fundamental domain $[0,1)^d$. Observe also that $n\gamma \in U$ mod $\mathbb{Z}^d$ if and only if $n \gamma^\prime \in (U - n \gamma^{\prime\prime}) \cap T$ mod $\mathbb{Z}^d$. Since $U_1(n)$ depends only on $n$ mod $q$, \[1_{B_d(\gamma,U)}(n)= 1_{B_{d}(\gamma^\prime, U_1(n \, (\text{mod } q)))}(n).\]
There is a linear transformation $M \in SL_d(\mathbb{Z})$ (which has a well-defined action on $\mathbb{R}^d/\mathbb{Z}^d$) such that $M(T) = (\mathbb{R}^{d^\prime}/\mathbb{Z}^{d^\prime}) \times \{0\}^{d - d^\prime}$. Let $U^\prime(n): = M(U_1(n)) \, \text{mod } \mathbb{Z}^d$ (with the $d-d^\prime$ trailing zeros removed and viewed as a subset of $[0,1)^{d^\prime}$). Let $\rho = M(\gamma^\prime)$, and again remove the final $d-d^\prime$ coordinates (which are all integers) to view $\rho \in \mathbb{R}^{d^\prime}$. Since $\rho n$ mod $\mathbb{Z}^{d^\prime}$ totally equidistributes in $\mathbb{R}^{d^\prime}/\mathbb{Z}^{d^\prime}$ by construction, we conclude from the Kronecker--Weyl theorem that $1,\rho_1,\dots \rho_{d^\prime}$ are linearly independent over $\mathbb{Q}$. As $B_d(\gamma^\prime,U_1(n \, (\text{mod } q)))=  B_{d^\prime}(\rho, U^\prime(n \, (\text{mod } q)))$, the first part of the lemma follows. 

For the second part of the lemma, note that $T \subset [0,1)^d$ is a disjoint union of finitely many convex sets (each a translation of a fixed linear subspace intersected with $[0,1)^d$). Therefore, if $U \subset [0,1)^d$ is convex, $U_1(n)$ is a disjoint union of finitely many convex sets. Hence $M(U_1(n)) \subset \mathbb{R}^{d^\prime} \times \mathbb{Z}^{d -d^\prime}$ is also a union of disjoint convex sets, say $M(U_1(n)) = \bigcup_{k \leqslant K} S_k$. Reducing modulo $\mathbb{Z}^{d^\prime}$ to give $U^\prime(n) \subset [0,1)^{d^\prime}$ may split each convex set $S_k$ into a union of possibly $2^{d^\prime}$ convex sets, but this larger collection still remains disjoint, as the points in $M(U_1(n))$ are distinct modulo $\mathbb{Z}^{d}$. 
\end{proof}

We now formulate the following result for approximating Bohr sets by trigonometric polynomials. 

\begin{lemma}[Approximation of Bohr sets by trigonometric polynomials and periodic part]\label{le_approximation}
Let $d\geq 1$ and $\alpha\in \mathbb{R}^d$ be fixed. Let $B = B_d(\alpha,U)\in \mathcal{B}_{\textnormal{convex}}$. Then there exists an integer $q \geqslant 1$ (depending only on $\alpha$) and for every $\varepsilon >0$ a decomposition of functions
\begin{align*}
1_{B}(n)=T_{\varepsilon}(n)+ \sum\limits_{a \leqslant q} t_a 1_{n \equiv a \, \textnormal{mod } q} + \mathcal{E}_{\varepsilon}(n)    
\end{align*}
such that the following hold.
\begin{enumerate}[(i)]\item For some constant $K_{\varepsilon} \ll_{\varepsilon} 1$, some sequence of real numbers $(\gamma_{k,\varepsilon})_{k\geq 1}$, and some complex numbers $c_{\varepsilon}(k)$ with $|c_{\varepsilon}(k)|\ll_{\varepsilon} 1$  we have  
\begin{align*}
T_{\varepsilon}(x)=\sum_{1\leq k\leq K_{\varepsilon}} c_{\varepsilon}(k)e(\gamma_{k,\varepsilon}x) \end{align*} for all $x \in \mathbb{R}$. Furthermore, if $\alpha \notin \mathbb{Q}^d$ then $\gamma_{k,\varepsilon} \notin \mathbb{Q}$ for all $k$.
\item We have $t_a \geqslant 0$ for all $a$ and $\frac{1}{q} \sum_{a \leqslant q} t_a = \delta_B + O(\varepsilon)$. 
\item $\limsup_{X\to \infty}\mathbb{E}_{n\leq X}|\mathcal{E}_{\varepsilon}(n)|\leq \varepsilon$.   
\end{enumerate}
\end{lemma}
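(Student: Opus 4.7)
The strategy is to first apply Lemma~\ref{le_ratdepend} to strip away rational dependencies in $\alpha$, then on the resulting totally irrational torus approximate the (union-of-convex-sets) indicator by a mollified-and-Fourier-truncated trigonometric polynomial, and finally verify the required properties using joint Kronecker--Weyl equidistribution.

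First I would dispose of the case $\alpha \in \mathbb{Q}^d$: here $1_B$ is purely periodic, so for a suitable $q$ depending on $\alpha$ one takes $T_\varepsilon \equiv 0 \equiv \mathcal{E}_\varepsilon$ and $t_a := 1_B(a)$, and all conclusions hold trivially. Otherwise, Lemma~\ref{le_ratdepend} supplies an integer $d' \geqslant 1$, a totally irrational $\rho \in \mathbb{R}^{d'}$, an integer $q \geqslant 1$ and sets $U'(a) \subseteq [0,1)^{d'}$ (each a finite union of convex pieces, bounded in terms of $\alpha$) with
\[ 1_B(n) \,=\, \sum_{a=1}^q 1_{n \equiv a \Mod{q}}\, 1_{U'(a)}(\rho n \bmod \mathbb{Z}^{d'}), \qquad \tfrac{1}{q}\sum_{a=1}^q \vol(U'(a)) = \delta_B. \]

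Next I would approximate each $1_{U'(a)}$ on the torus $\mathbb{R}^{d'}/\mathbb{Z}^{d'}$. Since the boundary of $U'(a)$ is a finite union of convex-set boundaries and so has Lebesgue measure zero, mollification against a standard smooth bump function produces a Lipschitz function $g_{a,\varepsilon}:\mathbb{R}^{d'}/\mathbb{Z}^{d'}\to[0,1]$ satisfying $\|1_{U'(a)} - g_{a,\varepsilon}\|_{L^1} \leqslant \varepsilon/(2q)$ and $\widehat{g_{a,\varepsilon}}(0) = \vol(U'(a))$. Truncating the Fourier series of $g_{a,\varepsilon}$ at a sufficiently high level $N_\varepsilon$ (depending on $\varepsilon$ and the Lipschitz constant, hence on $\varepsilon$ and $\alpha$) yields a trigonometric polynomial $P_{a,\varepsilon}$ on the torus with the same constant term $\vol(U'(a))$ and with $\|g_{a,\varepsilon} - P_{a,\varepsilon}\|_{L^1} \leqslant \varepsilon/(2q)$. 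I then set $t_a := \vol(U'(a)) \geqslant 0$ and
\[ T_\varepsilon(n) \,:=\, \sum_{a=1}^q 1_{n \equiv a \Mod{q}}\bigl( P_{a,\varepsilon}(\rho n) - t_a \bigr). \]
Expanding $1_{n \equiv a \Mod{q}} = \tfrac{1}{q}\sum_{b=0}^{q-1} e(b(n-a)/q)$ presents $T_\varepsilon$ as a trigonometric polynomial with $O_\varepsilon(1)$ nonzero frequencies of the form $k\cdot \rho + b/q$ with $k \in \mathbb{Z}^{d'} \setminus \{0\}$, each of which is irrational because $k \cdot \rho \notin \mathbb{Q}$ by the total irrationality of $\rho$. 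This gives (i), and (ii) is immediate from the final identity of Lemma~\ref{le_ratdepend} (with error term $0$, better than required).

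For (iii), the error is
\[ \mathcal{E}_\varepsilon(n) \,=\, \sum_{a=1}^q 1_{n \equiv a \Mod{q}}\bigl(1_{U'(a)}(\rho n) - P_{a,\varepsilon}(\rho n)\bigr). \]
The joint distribution of $(n \bmod q,\, \rho n \bmod \mathbb{Z}^{d'})$ equidistributes on $\{1,\ldots,q\}\times[0,1)^{d'}$ against the product of the two uniform measures; this is a direct consequence of Kronecker--Weyl applied to the phases $(q\rho_1,\ldots,q\rho_{d'})$ on each residue class. Since $|1_{U'(a)} - P_{a,\varepsilon}|$ is Riemann integrable on the torus (the discontinuity set is the boundary of $U'(a)$, of measure zero), one deduces
\[ \lim_{X\to \infty} \mathbb{E}_{n\leqslant X} |\mathcal{E}_\varepsilon(n)| \,=\, \tfrac{1}{q}\sum_{a=1}^q \|1_{U'(a)} - P_{a,\varepsilon}\|_{L^1} \,\leqslant\, \varepsilon. \]
The main technical subtlety is simply ensuring Step~2's mollification-plus-Fourier-truncation simultaneously preserves the constant term exactly and delivers the required $L^1$ control with a polynomial of $\varepsilon$-dependent degree; everything else flows mechanically from Lemma~\ref{le_ratdepend} and Kronecker--Weyl.
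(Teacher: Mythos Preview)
Your argument is correct and follows essentially the same route as the paper's proof: reduce via Lemma~\ref{le_ratdepend} to a totally irrational phase $\rho$ modulo a periodic structure mod~$q$, smooth the indicator, Fourier-truncate, and separate off the zero mode as the periodic part. The only differences are cosmetic: the paper cites off-the-shelf lemmas from Green--Tao (\cite[Corollary~A.3]{GT_linear} and \cite[Lemma~A.9]{GT_quadratic}) in place of your direct mollification-plus-truncation, and as a consequence the paper's $t_a$ equal $\int F_{\varepsilon,S_{a,l}}$ rather than $\vol(U'(a))$ exactly, which is why it states (ii) with an $O(\varepsilon)$ error while you in fact obtain it with error zero.
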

\begin{corollary}[Approximation of Bohr sets by trigonometric polynomials]
\label{cor_approx}
Let $B = B_d(\alpha,U)\in \mathcal{B}_{\textnormal{convex}}$. Then for every $\varepsilon >0$ there exists a  decomposition \[ 1_B(n) = T_{\varepsilon}(n) + \mathcal{E}_{\varepsilon}(n)\] with $T_{\varepsilon}$ and $\mathcal{E}_{\varepsilon}$ having the same properties as in the conclusion of Lemma~\ref{le_approximation}, save for the fact that some of the phases $\gamma_{k,\varepsilon}$ may be rational. 
\end{corollary}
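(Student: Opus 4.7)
The plan is simply to absorb the $q$-periodic term from Lemma~\ref{le_approximation} into the trigonometric polynomial $T_{\varepsilon}$, at the cost of introducing rational phases.

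First, I would apply Lemma~\ref{le_approximation} to obtain integers $q \geqslant 1$ (depending only on $\alpha$) and, for each $\varepsilon>0$, a decomposition
\begin{align*}
1_B(n) = T_\varepsilon(n) + \sum_{a \leqslant q} t_a \, 1_{n \equiv a \,\text{mod } q} + \mathcal{E}_\varepsilon(n)
\end{align*}
satisfying the three properties listed in that lemma.

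Next, I would use discrete Fourier analysis on $\mathbb{Z}/q\mathbb{Z}$ to rewrite the periodic term. Since
\begin{align*}
1_{n \equiv a \,\text{mod } q} = \frac{1}{q} \sum_{b=0}^{q-1} e\!\left( \frac{b(n-a)}{q} \right),
\end{align*}
we obtain
\begin{align*}
\sum_{a\leqslant q} t_a \, 1_{n \equiv a \,\text{mod } q} = \sum_{b=0}^{q-1} \widehat{t}(b)\, e\!\left( \frac{bn}{q} \right), \quad \text{where } \widehat{t}(b):= \frac{1}{q}\sum_{a \leqslant q} t_a \, e\!\left( -\frac{ab}{q} \right).
\end{align*}
By part (ii) of Lemma~\ref{le_approximation}, $t_a \geqslant 0$ and $\frac{1}{q}\sum_{a \leqslant q} t_a = \delta_B + O(\varepsilon) = O(1)$, so $|\widehat{t}(b)| \ll 1$ for all $b$.

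I would then define
\begin{align*}
T'_\varepsilon(x) := T_\varepsilon(x) + \sum_{b=0}^{q-1} \widehat{t}(b) \, e\!\left( \frac{bx}{q} \right),
\end{align*}
which is a trigonometric polynomial in $x$ with at most $K_\varepsilon + q \ll_\varepsilon 1$ frequencies (since $q$ depends only on the fixed $\alpha$) and coefficients bounded by $O_\varepsilon(1)$. The new frequencies $b/q$ are rational, which is why the corollary only claims the phases may be rational, even when $\alpha \notin \mathbb{Q}^d$. Setting $\mathcal{E}'_\varepsilon := \mathcal{E}_\varepsilon$, we have $1_B(n) = T'_\varepsilon(n) + \mathcal{E}'_\varepsilon(n)$, and the bound $\limsup_{X \to \infty} \mathbb{E}_{n \leqslant X} |\mathcal{E}'_\varepsilon(n)| \leqslant \varepsilon$ is inherited directly from part (iii) of Lemma~\ref{le_approximation}. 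There is no real obstacle here; the only tiny verification is ensuring that the coefficient bound $|\widehat{t}(b)| \ll 1$ comes out correctly, which is immediate from the nonnegativity of the $t_a$ and the bound on their average.
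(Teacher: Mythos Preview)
Your proposal is correct and follows essentially the same approach as the paper: expand the $q$-periodic term via the finite Fourier transform on $\mathbb{Z}/q\mathbb{Z}$, bound the resulting coefficients using $\frac{1}{q}\sum_{a\leqslant q} t_a = \delta_B + O(\varepsilon)$, and merge these rational-phase exponentials into $T_\varepsilon$. The paper's proof is a one-line version of exactly this.
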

\begin{proof}[Proof of Corollary~\ref{cor_approx}]
Expand $\sum_{a \leqslant q} t_a 1_{n \equiv a \, \text{mod } q} = \frac{1}{q} \sum_{a,r \leqslant q} t_a e\Big( \frac{-ra}{q}\Big) e\Big( \frac{rn}{q}\Big)$ and amalgamate with the original trigonometric polynomial $T_{\varepsilon}$. The coefficients $c_{\varepsilon}(k)$ remain suitably bounded, since $\vert  \frac{1}{q}\sum_{a \leqslant q} t_a e(\frac{-ra}{q})\vert \leqslant \frac{1}{q}\sum_{a \leqslant q} \vert t_a\vert \leqslant 1 + O(\varepsilon) = O_{\varepsilon}(1).$
\end{proof}

\begin{proof}[Proof of Lemma~\ref{le_approximation}] If $\alpha\in \mathbb{Q}^d$ then $1_B(n)$ is periodic so may be written exactly as $\sum_{a \leqslant q} t_a 1_{n \equiv a \, \text{mod } q}$ (for some $q$), with no error. Each $t_a \geqslant 0$, and $\frac{1}{q} \sum_{a \leqslant q} t_a = \delta_B$ exactly. 

If $\alpha \notin \mathbb{Q}^d$, we use Lemma~\ref{le_ratdepend} to construct $d^\prime$, $q$, $\rho \in \mathbb{R}^{d^\prime}$, and sets $U^\prime(1), \dots, U^\prime(q) \subset [0,1)^{d^\prime}$; expanding the condition $n \equiv a \, (\text{mod } q)$ in additive characters, we get \[ 1_B(n) = 1_{B_{d^\prime}( \rho, U^\prime(n \, (\text{mod } q)))}(n) = \frac{1}{q}\sum\limits_{a,r=1}^q e\Big(-\frac{ra}{q}\Big) 1_{B_{d^\prime}(\rho, U^\prime(a))}(n) e\Big(\frac{r}{q}n\Big).\] From the second part of Lemma~\ref{le_ratdepend}, write $U^\prime(a)$ as union $\bigcup_{l \leqslant L} S_{a,l}$ of disjoint convex sets $S_{a,l} \subset [0,1)^{d^\prime}$. By further subdivision as necessary, we may assume that each $S_{a,l}$ is contained in a Cartesian box of side-length $\frac{1}{10}$. Note that $L$ depends only on $\alpha$. 

By~\cite[Corollary A.3]{GT_linear}, we can write 
\begin{equation}
\label{eq:approxofconvex}
1_{S_{a,l}} = F_{\varepsilon,S_{a,l}} + O(G_{\varepsilon,S_{a,l}}),
\end{equation} where $F_{\varepsilon,S_{a,l}}, G_{\varepsilon,S_{a,l}}: \mathbb{R}^{d^\prime}\longrightarrow [0,1]$ are non-negative Lipschitz functions with Lipschitz constants $O(\varepsilon^{-1})$, where both functions are supported within Cartesian boxes of side-length $\frac{1}{5}$, and where $\int_{\mathbb{R}^{d^\prime}} G_{\varepsilon,S_{a,l}}(x) \d x = O(\varepsilon)$. Because of their restricted support, we may consider $F_{\varepsilon,S_{a,l}}, G_{\varepsilon,S_{a,l}}$ as Lipschitz functions on $\mathbb{R}^{d^\prime}/ \mathbb{Z}^{d^\prime}$ with Lipschitz constant $O(\varepsilon^{-1})$, and furthermore where $\int_{\mathbb{R}^{d^\prime}/\mathbb{Z}^{d^\prime}} G_{\varepsilon,S_{a,l}}(x)  \d x = O(\varepsilon)$.

From~\cite[Lemma A.9]{GT_quadratic}, we obtain  (for all $K$ sufficiently large)
\begin{align}
\label{eq_trigdecomp}
1_{B_{d^\prime}(\rho,U^\prime(a))}(n) &= \sum\limits_{l \leqslant L}(F_{\varepsilon,S_{a,l}}(\rho n) + O(G_{\varepsilon,S_{a,l}}(\rho n))) \nonumber\\
&= \sum\limits_{l \leqslant L}\Big(\sum_{\substack{k \in \mathbb{Z}^{d^\prime} \\ \Vert k\Vert_{\infty} \leqslant K}} c_{K,\varepsilon, a,l}(k) e(n k \cdot \rho) + O\Big(\frac{\log K}{\varepsilon K}\Big) + O(G_{\varepsilon,S_{a,l}}(\rho n))\Big)
\end{align}
for some complex coefficients $c_{K,\varepsilon, a,l}(k)$ with $\vert c_{K,\varepsilon,a,l}(k)\vert \ll_{\varepsilon} 1$. Choose $K = K_{\varepsilon}$ sufficiently large so that $(\log K) \varepsilon^{-1}K^{-1} \leqslant \varepsilon$.  Note that (as the sequence $\rho n$ equidistributes in $\mathbb{R}^{d^\prime}/\mathbb{Z}^{d^\prime}$) we have $\lim_{X \rightarrow \infty} \E_{n \leqslant X}  G_{\varepsilon,S_{a,l}}(\rho n) = \int_{\mathbb{R}^{d^\prime}/\mathbb{Z}^{d^\prime}} G_{\varepsilon, S_{a,l}}(x) \d x = O(\varepsilon)$. Therefore, inserting the sums over $a$, $r$ into~\eqref{eq_trigdecomp} and separating out the $k=0$ term, we get 
\begin{align}
\label{eq_threeterm}
1_{B}(n) = \sum\limits_{\substack{k \in \mathbb{Z}^{d^\prime} \\  \Vert k\Vert_{\infty} \leqslant K \\ k \neq 0}} \sum\limits_{r \leqslant q}e(n(k \cdot \rho + \frac{r}{q}))&\Big( \frac{1}{q}\sum\limits_{l \leqslant L} \sum\limits_{a\leqslant q}e\Big( - \frac{ra}{q}\Big) c_{K, \varepsilon, a,l}(k)\Big) \nonumber\\ &+ \sum_{a \leqslant q} 1_{n \equiv a \, \text{mod } q} \sum\limits_{l \leqslant L} c_{K,\varepsilon, a,l}(0) + \mathcal{E}_{\varepsilon}(n) 
\end{align} where $\limsup\limits_{X \rightarrow \infty} \E_{n \leqslant X} \vert \mathcal{E}_{\varepsilon}(n)\vert = O(\varepsilon)$.

When $k \in \mathbb{Z}^{d^\prime} \setminus \{0\}$, Lemma~\ref{le_ratdepend} ensures that $k \cdot \rho + \frac{r}{q} \notin \mathbb{Q}$. Therefore, replacing $\varepsilon$ by $\varepsilon/b_{\alpha}$ for a suitable constant $b_{\alpha}$, the first term satisfies the conditions to be $T_{\varepsilon}(n)$ and $\mathcal{E}_{\varepsilon}$ is a suitable error. It remains to prove part (ii) of the lemma. 

By summing~\eqref{eq_threeterm}  over $n \leqslant X$ (and using the fact that $\sum_{n \leqslant X} e(n(k \cdot \rho + \frac{r}{q}) )= O(1)$ uniformly in $X$)\[ \E_{n \leqslant X} 1_B(n) = \frac{1}{q}\sum\limits_{a \leqslant q} \sum\limits_{l \leqslant L} c_{K,\varepsilon,a,l}(0) + O(\varepsilon)\] for large enough $X$. From the construction of the $c_{K,\varepsilon,a,l}(k)$ in~\cite[Lemma A.9]{GT_quadratic}, we also derive \[c_{K,\varepsilon, a,l}(0) = \int_{\mathbb{R}^{d^\prime}/\mathbb{Z}^{d^\prime}} F_{\varepsilon, S_{a,l}}(x) \d x \geqslant 0.\] Setting $t_a = \sum_{l \leqslant L} c_{K,\varepsilon,a,l}(0)$, the part (ii) of the lemma follows.
\end{proof}

\section{Lemmas on correlations}

\subsection{Correlations twisted by additive characters}

In this section, we prove a correlation estimate for multiplicative functions twisted by linear phases (Lemma~\ref{le_frantzikinakis}) that is important in the proof of our main theorems. We also resolve the pretentious case of the proofs of our main theorems in Lemma~\ref{le_pretentious}. We begin by summarising some known correlation estimates of Tao~\cite{tao}, the first author~\cite{teravainen}, and Frantzikinakis--Host~\cite{fh-IMRN}.

\begin{lemma}\label{le_tao_fh}
Let $k\geq 1$, and let $a_1,\ldots, a_k>0$ and $h_1,\ldots, h_k\in \mathbb{N}$ be integers with $a_ih_j-a_jh_i\neq 0$ for all $i\neq j$. Let $f_1,\ldots, f_k:\mathbb{N}\to [-1,1]$ be multiplicative functions.
\begin{enumerate}
    \item Suppose that $f_1$ is non-pretentious. Then we have
   \begin{align*}
\lim_{X\to \infty}\mathbb{E}_{n\leq X}^{\log}f_1(a_1n+h_1)f_2(a_2n+h_2)=0.
\end{align*}

\item Suppose that $f_1$ is non-pretentious. Then for some $\eta>0$, depending only on the values $a_i,h_i$, we have
\begin{align*}
 \limsup_{X\to \infty}|\mathbb{E}_{n\leq X}^{\log}\prod_{j=1}^k f_j(a_jn+h_j)|\leq 1-\eta.   
\end{align*}

\item For any irrational $\gamma\in \mathbb{R}$ we have
   \begin{align*}
\lim_{X\to \infty}\mathbb{E}_{n\leq X}^{\log}e(\gamma n)\prod_{j=1}^k f_j(n+h_j)=0.
\end{align*}
\end{enumerate}
\end{lemma}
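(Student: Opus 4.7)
The plan is to deduce each of the three parts from the corresponding result in the literature cited just above the statement; this lemma is essentially a convenient bookkeeping compilation, and there is no substantive new mathematical work to do. The only thing to verify in each case is that the formulation in the cited paper matches what is needed here, which is routine.

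For part (1), I would invoke Tao's logarithmic two-point Elliott theorem from~\cite{tao}. That theorem gives exactly the vanishing of $\E_{n\leq X}^{\log} f_1(a_1 n + h_1) f_2(a_2 n + h_2)$ for bounded multiplicative $f_1,f_2$ with $f_1$ non-pretentious and $a_1 h_2 \neq a_2 h_1$, the latter condition being precisely the non-proportionality of the two affine forms. If one prefers to start from the version with $a_1 = a_2 = 1$ that often appears in the literature, a routine reduction works: split $n$ into residue classes modulo $\mathrm{lcm}(a_1,a_2)$ and change variables to reduce to forms with a common leading coefficient.

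For part (2), the case $k=1$ follows from Halász's theorem (a bounded real-valued non-pretentious multiplicative function has mean value zero, and hence logarithmic mean zero by partial summation, so $\eta$ can be taken arbitrarily close to $1$); the case $k=2$ follows from part (1), which even gives full cancellation; and the case $k \geq 3$ is precisely the ``99\% Elliott conjecture'' theorem of the first author from~\cite{teravainen}, whose conclusion gives $\eta$ depending only on the linear forms, as required.

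For part (3), I would cite directly the theorem of Frantzikinakis and Host from~\cite{fh-IMRN}, which asserts exactly the vanishing of $\E_{n\leq X}^{\log} e(\gamma n) \prod_j f_j(n + h_j)$ for any irrational $\gamma$ and bounded multiplicative $f_j$. The hard mathematical content is therefore entirely contained in the three cited papers; the expected main (and really only) obstacle is ensuring that none of the three cited theorems is stated in a technically different formulation that would require a small extra argument, but in each case the published version is general enough to apply verbatim.
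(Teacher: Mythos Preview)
Your treatment of parts (1) and (3) matches the paper's, modulo one small wrinkle you gloss over in (1): Tao's theorem in~\cite{tao} is stated with a non-pretentiousness hypothesis involving twists by $n^{it}$, whereas the paper's definition of non-pretentious omits the archimedean characters. For real-valued $f_1$ these coincide by~\cite[Lemma C.1]{MRT}, and the paper explicitly invokes this; you should too.

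The genuine gap is in part (2). The result~\cite[Theorem 2.6]{teravainen} that you cite is stated for multiplicative functions taking values in $\{-1,+1\}$ (more generally, roots of unity), not in the full interval $[-1,1]$. The paper devotes the bulk of its proof of part (2) to bridging exactly this discrepancy: one writes $f_1 = |f_1|\cdot \mathrm{sgn}(f_1)$, disposes of the case where $|f_1|$ has small logarithmic mean by a direct argument, and otherwise replaces $|f_1|$ by a random $\{-1,+1\}$-valued multiplicative function $\mathbf{f_1'}$ with $\E\mathbf{f_1'}(p^j)=|f_1(p^j)|$, so that $\mathbf{f_1}:=\mathbf{f_1'}\,\mathrm{sgn}(f_1)$ is $\{-1,+1\}$-valued, non-pretentious with high probability, and has $\E\mathbf{f_1}(n)=f_1(n)$. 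One then applies~\cite[Theorem 2.6]{teravainen} to $\mathbf{f_1}$ and takes expectations (via reverse Fatou). This is not entirely routine, and your proposal as written does not supply it. You should either carry out this reduction or note (as the paper does in a footnote) that the proof in~\cite{teravainen} can be adapted directly to functions valued in the convex hull of roots of unity.
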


\begin{proof}
Part (1) follows from Tao's resolution of the two-point logarithmic Elliott conjecture~\cite[Theorem 1.3]{tao}, after noting that the non-pretentiousness assumption on $f_1$ there (which involves archimedean characters $n^{it}$) can be weakened in the case of real-valued functions $f_1$ using~\cite[Lemma C.1]{MRT}. Part (3) is the ``irrational logarithmic Elliott conjecture'' of Frantzikinakis--Host~\cite[Corollary 1.4]{fh-IMRN}.

It remains to prove part (2). If we assume that $f_1$ takes values in $\{-1,+1\}$, then part (2) follows immediately from the ``99\% Elliott conjecture'' of the first author ~\cite[Theorem 2.6]{teravainen} (using partial summation to pass to the logarithmic average). To deal with the general case\footnote{Alternatively, one could adapt the methods from \cite{teravainen}. Indeed, \cite[Proposition 5.4]{teravainen} as stated is for multiplicative functions taking values which are $q^{th}$ roots of unity for some fixed $q$. It is easy to adapt the proof to the case of multiplicative functions taking values in the convex hull of the $q^{th}$ roots of unity, which when $q=2$ gives the full interval $[-1,+1]$.} when $f_1$ takes values in $[-1,+1]$, we use an argument of Tao \cite[Proposition 2.1]{tao}. Write $f_1 = f_1^\prime f_1^{\prime\prime}$, where $f_1^\prime(n) = \vert f_1(n) \vert$ and $f_1^{\prime\prime}(n)= \sgn(f_1)$. Let $A$ be a sufficiently large quantity (depending on the $a_i$, $h_i$, and the value of $\eta$ that can be established in part (2) when $\vert f_1(n)\vert = 1$ for all $n$). We may assume that \[ \sum\limits_{p} \frac{1 - f_1^\prime(p)}{p} < A. \] Indeed, if not then using the standard elementary bound \[ \E_{n \leqslant X}^{\log} f^\prime(n) \ll \exp(-\sum\limits_{p \leqslant X} \frac{1 - f^\prime(p)}{p}),\] which holds for any non-negative multiplicative function, we conclude that \[ \limsup_{X \rightarrow \infty}\E_{n \leqslant X}^{\log} f_1^\prime(n) = o_{A \rightarrow \infty}(1).\] Using non-negativity again we derive \[ \limsup_{X \rightarrow \infty}\E_{n \leqslant X}^{\log} f_1^\prime(a_1n + b_1) = o_{A \rightarrow \infty}(1),\] and so by the triangle inequality we may conclude that \[\limsup_{X \rightarrow \infty}\vert \E_{n \leqslant X}^{\log} \prod\limits_{i=1}^k f_i(a_in + h_i)\vert \leqslant 1- \eta\] as required. 

Now, for later purposes we let $S$ be the set of $\{-1,+1\}$-valued multiplicative functions $g$ for which \[ \sum_{p} \frac{1 - g(p)}{p} < A^2.\] We also construct a random multiplicative function $\mathbf{f_1^\prime} $ taking values in $\{-1,+1\}$ by taking $\mathbf{f_1^\prime}(p^j)$ to be independent $\{-1,+1\}$-valued random variables with mean $\E \mathbf{f_1^\prime}(p^j) = f_1^\prime(p^j)$. (There is a slight overloading of the symbol $\E$ in what follows, but we hope that it will be clear that $\E_{n \leqslant X}^{\log}$ refers to logarithmic averaging and $\E$ refers to expectation of a random variable.) By Fubini's theorem we have \[ \E \sum_{p} \frac{1 - \mathbf{f_1^\prime}(p)}{p} < A,\] so by Markov's inequality we have $\mathbf{f_1^\prime} \in S$ with probability at least $1 - O(A^{-1})$. Supposing that $\mathbf{f_1^\prime} \in S$, set $\mathbf{f_1}: = \mathbf{f_1^\prime} f_1^{\prime\prime}$. Thus $\mathbf{f_1}$ is a random multiplicative function taking values in $\{-1,+1\}$ such that $\E \mathbf{f_1}(n) = f_1(n)$ for all $n$. By the triangle inequality we have 
\begin{align*}
 \vert \mathbf{f_1}(p) - f_1(p) \vert & = \vert f_1^{\prime\prime}(p)( \mathbf{f_1^\prime}(p) - f_1^\prime(p)) \vert \\
&= \vert f_1^{\prime\prime}(p) ((1 - f_1^\prime(p)) - (1 - \mathbf{f_1^\prime}(p)))\vert \\
& \leqslant (1 - f_1^\prime(p)) + (1 - \mathbf{f_1^\prime}(p)).
\end{align*}
In particular \[ \sum_p \frac{ \mathbf{f_1}(p) \overline{\chi}(p)}{p} = \sum_p \frac{ f_1(p) \overline{\chi}(p)}{p}  + O_A(1).\] Taking real parts, since $f_1$ is non-pretentious we conclude that $\mathbf{f_1}$ is non-pretentious. Since $\mathbf{f_1}$ takes values in $\{-1,+1\}$, by \cite[Theorem 2.6]{teravainen} we get 
\begin{align}\label{eq:compmultcase}
\limsup_{X\to \infty}\vert \E_{n\leqslant X}^{\log} \mathbf{f_1}(a_1n + h_1) \prod\limits_{i=2}^k f_i(a_in + h_i)\vert \leqslant 1 - \eta
\end{align}
for some absolute constant $\eta>0$ (depending on $a_i,h_i$ but not on any of the multiplicative functions).

Therefore, by~\eqref{eq:compmultcase} and the reverse Fatou's lemma, for some $v\in \{-1,+1\}$ we have
\begin{align*}
\limsup_{X\to \infty}\vert \E_{n \leqslant X}^{\log} \prod\limits_{i=1}^k f_i(a_in + h_i) \vert 
=&\limsup_{X\to \infty}\E_{n \leqslant X}^{\log} v\prod\limits_{i=1}^k f_i(a_in + h_i)\\ 
=&\limsup_{X\to \infty}\E_{n \leqslant X}^{\log} v\E \mathbf{f_1}(a_in + h_i) \prod\limits_{i=2}^k f_i(a_in + h_i) \\
=&\limsup_{X\to \infty} \E_{n \leqslant X}^{\log} v\E (1_{S}(\mathbf{f_1}) + 1_{S^c}(\mathbf{f_1}))\mathbf{f_1}(a_in + h_i) \prod\limits_{i=2}^k f_i(a_in + h_i)\\
\leqslant & \E 1_S(\mathbf{f_1})\limsup_{X\to \infty} \E_{n \leqslant X}^{\log}v\mathbf{f_1}(a_in + h_i) \prod\limits_{i=2}^k f_i(a_in + h_i)  + \E 1_{S^c}(\mathbf{f_1}) \\
\leqslant & 1 - \eta + O(A^{-1}) \\
\leqslant & 1 - \frac{\eta}{2}
\end{align*}
if $A$ is large enough. Thus, replacing $\eta$ by $\eta/2$ we see that part (2) holds for general non-pretentious multiplicative functions $f_1: \mathbb{N} \to [-1,1]$. 
\end{proof}

As we will soon see, Theorems~\ref{thm_bohrchowla} and~\ref{thm_highercorrelationsandBohr} follow quickly from Lemma~\ref{le_approximation} and the following estimate (which is based heavily on Lemma~\ref{le_tao_fh}).

\begin{lemma}\label{le_frantzikinakis} 
Let $k \geqslant 1$, and let $a_1,\ldots, a_k>0$ and $h_1,\ldots, h_k\in \mathbb{N}$ be integers with $a_ih_j-a_jh_i\neq 0$ for all $i\neq j$. Let $f_1,\ldots, f_k:\mathbb{N}\to [-1,1]$ be multiplicative functions.
\begin{enumerate}
\item Suppose that $f_1$ is non-pretentious. Then for all $\gamma\in \mathbb{R}$ we have
\begin{align*}
\lim_{X\to \infty}\mathbb{E}_{n\leq X}^{\log}f_1(a_1n+h_1)f_2(a_2n+h_2)e(\gamma n)=0. \end{align*}
\item Suppose that $f_1$ is non-pretentious. If $\gamma \in \mathbb{Q}$ there is some $\eta>0$ (depending only on $\gamma$, the $a_i$ and the $h_i$) such that 
\[ \limsup_{X\to \infty}\vert\E_{n \leqslant X}^{\log} e(\gamma n) \prod\limits_{i=1}^k f_i(a_in + h_i) \vert \leqslant 1 - \eta.\] 
\item If $\gamma \notin \mathbb{Q}$, then \begin{align}\label{eq_Elliott_twist}\lim_{X \rightarrow \infty} \E_{n \leqslant X}^{\log} e(\gamma n) \prod\limits_{i=1}^k f_i(a_in + h_i)   = 0.\end{align} 
\end{enumerate}
\end{lemma}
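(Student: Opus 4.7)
The plan is to prove the three parts of Lemma~\ref{le_frantzikinakis} by splitting on whether $\gamma$ lies in $\mathbb{Q}$ or not, and leveraging the corresponding parts of Lemma~\ref{le_tao_fh}.

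For the rational-$\gamma$ portions of parts (1) and (2), I would exploit the periodicity of $e(\gamma n)$. Writing $\gamma = r/q$ in lowest terms, the exponential is constant, equal to $e(ra/q)$, on each residue class $n \equiv a \pmod q$. Substituting $n = qm + a$ and using that $\log(X/q)/\log X \to 1$, one obtains
\[
\E_{n \leq X}^{\log} e(\gamma n) \prod_i f_i(a_i n + h_i) = \sum_{a=1}^q \frac{e(ra/q)}{q}\, \E_{m \leq X/q}^{\log} \prod_i f_i\big((a_iq)m + (a_ia + h_i)\big) + o(1).
\]
The new linear forms are pairwise affinely independent because $(a_iq)(a_ja + h_j) - (a_jq)(a_ia + h_i) = q(a_i h_j - a_j h_i) \neq 0$. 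For part (1), which is the $k=2$ case, Lemma~\ref{le_tao_fh}(1) makes each inner term $o(1)$, so the whole sum vanishes. For part (2), Lemma~\ref{le_tao_fh}(2) gives each inner term a bound of $1 - \eta_0$ uniformly in $a \in \{1,\ldots,q\}$ (the parameters entering that lemma are bounded in terms of $q$ and $\max_i(a_i, |h_i|)$), so the triangle inequality together with $\sum_a 1/q = 1$ yields the claimed bound.

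For part (3) with irrational $\gamma$ the goal is to extend Lemma~\ref{le_tao_fh}(3), which is only available for $a_i = 1$, to arbitrary $a_i$. A naive AP split $n = Lm + r$ with $L = \mathrm{lcm}(a_1,\ldots,a_k)$ reduces matters to showing, for each $r$, that
\[
\E_{m \leq X/L}^{\log} e(L\gamma m) \prod_i f_i\big((a_iL)m + (a_ir + h_i)\big) \longrightarrow 0.
\]
Here $L\gamma$ is still irrational, but the coefficients $a_iL$ of $m$ continue to vary with $i$, so Lemma~\ref{le_tao_fh}(3) does not apply directly and the reduction essentially returns us to a problem of the same form we started with.

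This coefficient mismatch is the main obstacle. I expect the resolution to require either (a) a direct adaptation of the ergodic-theoretic argument of Frantzikinakis--Host to allow general linear forms $a_i n + h_i$, which does not appear to be purely formal; or (b) an inverse-theoretic approach exploiting the non-pretentiousness of $f_1$ together with Cauchy--Schwarz/van der Corput reductions. (The latter alone only gives a $1-\eta$ bound via Lemma~\ref{le_tao_fh}(2); naive iteration saturates at $(1-\eta)^{1/2^r}$, so some extra structural input is required to reach full cancellation.) Once part (3) is established, the irrational-$\gamma$ case of part (1) is just the instance $k = 2$, completing the lemma.
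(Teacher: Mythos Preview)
Your treatment of the rational case is correct and essentially identical to the paper's: split into residue classes modulo the denominator, substitute, and invoke Lemma~\ref{le_tao_fh}(1) and (2) respectively.

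For the irrational case, however, you have correctly located the obstacle but not the resolution, and the two remedies you float are both off target. The paper's argument is elementary and does not require reopening the Frantzikinakis--Host machinery or any inverse-theoretic input. The missing idea is a two-step reduction.

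First, one reduces to the case where each $f_i$ is \emph{completely} multiplicative. Writing $f_i = \widetilde{f_i} * g_i$ with $\widetilde{f_i}$ completely multiplicative (agreeing with $f_i$ on primes) and $g_i$ supported on squarefull numbers, one expands the convolution, truncates the $d_i$-sums, and is left with sums of the same shape but with completely multiplicative $\widetilde{f_i}$.

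Second, and this is the point you missed, complete multiplicativity lets you \emph{equalise the leading coefficients}. After arranging $(a_i,h_i)=1$ and $f_i(a_i)=1$, set $A = \prod_j a_j$ and $h_i' = h_i \prod_{j \neq i} a_j$; then $f_i(a_i n + h_i)$ differs from $f_i(An + h_i')$ only by a fixed multiplicative constant, so it suffices to bound
\[
\E_{n \leq X}^{\log} e(\gamma n) \prod_{i=1}^k f_i(An + h_i').
\]
Now the change of variables $m = An$ is available: expand $1_{A \mid m}$ in additive characters modulo $A$, and each resulting sum is exactly of the form in Lemma~\ref{le_tao_fh}(3), with shifts $h_i'$ and an irrational phase $\gamma/A + j/A$.

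So the gap is concrete: your AP split with $L = \mathrm{lcm}(a_i)$ and substitution $n = Lm + r$ keeps the distinct coefficients $a_i L$, whereas the complete-multiplicativity reduction collapses them all to a common $A$ before substituting. That is the step that makes Lemma~\ref{le_tao_fh}(3) applicable.
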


\begin{proof}\emph{Case 1: $\gamma$ rational.} Write $\gamma=a/b$ with $a\in \mathbb{Z}$ and $b\in \mathbb{N}$. Then by expanding $e(\gamma n)$ as a linear combination of indicators of arithmetic progressions modulo $b$, for part (1) it suffices to show that for each $1\leq r\leq b$ we have
\begin{align*}
 \mathbb{E}_{n\leq X}^{\log}f_1(a_1n+h_1)f_2(a_2n+h_2)1_{n\equiv r \, \text{mod }b}=o(1).   
\end{align*}
Making a change of variables, this reduces to
\begin{align*}
\mathbb{E}_{m\leq X/b}^{\log}f_1(a_1(bm+r)+h_1)f_2(a_2(bm+r)+h_2)=o(1).   
\end{align*}
But this follows from Lemma~\ref{le_tao_fh}(1).

For part (2) when $\gamma \in \mathbb{Q}$, proceeding analogously we seek some $\eta>0$ for which 
\[\limsup_{X\to \infty}\vert\E_{m \leqslant X/b} ^{\log} \prod\limits_{i=1}^k f_i(a_i bm + a_i r + h_i) \vert \leqslant 1- \eta\]
for each $1\leq r\leq b$.
This follows directly from Lemma~\ref{le_tao_fh}(2). 

\emph{Case 2: $\gamma$ irrational.} In this case, the same argument works for parts (1) and (3), so we write out the argument for general $k$. We first reduce to the case where $f_1,\ldots, f_k$ are completely multiplicative. For each $1\leq i\leq k$, write $f_i=\widetilde{f_i}*g_i$, where $\widetilde{f_i}$ is the completely multiplicative function given on the primes by $\widetilde{f_i}(p)=f_i(p)$, and $g_i$ is the multiplicative function given on prime powers $p^{\ell}$ ($\ell \geqslant 1$) by $g_i(p^\ell)=f_i(p^\ell)-f_i(p) f_i(p^{\ell-1})$. Note that $|g_i(p^\ell)|\leq 2$ for all $p,\ell$, and $g_i(p) = 0$. 

Writing $f_i(n)=\sum_{d\mid n}g_i(d)\widetilde{f_i}(n/d)$ and applying the triangle inequality,~\eqref{eq_Elliott_twist} reduces to showing that
\begin{align*}
\sum_{d_1,\ldots, d_k\geq 1} |g_1(d_1)|\cdots |g_k(d_k)|\left|\mathbb{E}_{n\leq X}^{\log}e(\gamma n)\prod_{i=1}^k \widetilde{f_i}\left(\frac{a_in+h_i}{d_i}\right)1_{d_i\mid a_in+h_i}\right|=o(1).   
\end{align*}
If the system of $k$ congruences $a_ix+b_i\equiv 0\pmod{d_i}$ with $1\leq i\leq k$ has a solution, then there is a unique solution of the form $x\equiv c\pmod D$, where $D$ is the least common multiple of $d_1,\dots,d_k$. Making the change of variables $n=Dm+c$ in \eqref{eq_comp_multiplicative}, for any $w\geq 1$ the contribution from the terms with $d_1>w$ is
\begin{align*}
 \ll \sum_{\substack{d_1,\ldots, d_k\geq 1\\d_1>w}}\frac{|g_1(d_1)|\cdots |g_k(d_k)|}{D}&\ll w^{-1/3} \sum_{d_1,\ldots, d_k\geq 1}\frac{|g_1(d_1)|\cdots |g_k(d_k)|}{D^{2/3}}\\
 &\ll w^{-1/3} \prod_p\Big(1+ \sum\limits_{\substack{(i_1,\ldots,i_k) \in \mathbb{Z}_{ \geqslant 0}^k \\ \max i_j \geqslant 1}} \frac{ \vert g_1(p^{i_1})\vert \cdots \vert g_k(p^{i_k})\vert}{(p^{\max i_j})^{2/3}}\Big)\\
 &\ll w^{-1/3}.   
\end{align*}
\noindent Similarly, the contribution of terms with $d_j>w$ for some $j$ is $\ll w^{-1/3}$. Letting $w\to \infty$, we see that it suffices to show that for any fixed $d_1,\ldots, d_k\geq 1$ we have 
\begin{align}\label{eq_comp_multiplicative}
\mathbb{E}_{n\leq X}^{\log}e(\gamma n)\prod_{i=1}^k \widetilde{f_i}\left(\frac{a_in+h_i}{d_i}\right)1_{d_i\mid a_in+b_i}=o(1).    
\end{align}
Substituting $n=Dm+c$ in \eqref{eq_comp_multiplicative}, we reduce to proving 
\begin{align*}
\mathbb{E}_{m\leq x/D}^{\log}e(\gamma D m)\prod_{i=1}^k \widetilde{f_i}\left(\frac{a_i(Dm+c)+h_i}{d_i}\right)=o(1).        
\end{align*}
The linear polynomials $a_i'x+h_i':=\frac{a_i(Dx+c)+h_i}{d_i}$ have integer coefficients by assumption, and we have $a_i'h_j\neq a_j'h_i'$ whenever $i\neq j$. Hence, the claim~\eqref{eq_Elliott_twist} would follow from the case of completely multiplicative functions.  

Thus, we assume that each $f_i$ is completely multiplicative and that $(a_i,h_i)=1$ for all $i\leq k$, since otherwise we can pull out the common factors by complete multiplicativity. We may further assume that $f_i(a_i)=1$ for all $i\leq k$, since the values of $f_i$ at the primes dividing $a_i$ do not influence~\eqref{eq_Elliott_twist}. 

Let $A = \prod_{i \leqslant k} a_i$, $h_i^\prime = h_i \prod_{j \neq i} a_j$. Then, writing $\gamma^\prime = \gamma/A$, by complete multiplicativity and the fact that $f_i(a_i)=1$ for all $i\leq k$, it suffices to show that  \[ \E_{ n \leqslant X}^{\log} e(\gamma^\prime A n)\prod\limits_{i=1}^kf_i(An + h_i^\prime)=o(1).\] Making the change of variables $m=An$, and expanding \[1_{m\equiv 0 \, \text{mod }A}=\frac{1}{A}\sum_{j=1}^Ae(jm/A),\] we reduce matters to showing that \[ \E_{m \leqslant AX}^{\log} e((\gamma^\prime + j/A)m) \prod\limits_{i=1}^k f_i(m + h_i^\prime)= o(1) \] for all integers $1 \leqslant j \leqslant A$. But as $\gamma^\prime + j/A$ is irrational, this follows from Lemma~\ref{le_tao_fh}(3). 
\end{proof}

\subsection{The pretentious case}

We now prove that Theorems~\ref{thm_twopoint}(1) and \ref{thm_hom99}(1) hold in the case of pretentious functions. 

\begin{lemma}\label{le_pretentious} Let $k\geq 1$ and let $f_1,\ldots,f_k:\mathbb{N}\to [-1,1]$ be pretentious multiplicative functions. Let $\alpha_1,\ldots, \alpha_k>0$ and $\beta_1,\ldots, \beta_k\in \mathbb{R}$ be such that $1,\alpha_1,\ldots, \alpha_k$ are linearly independent over $\mathbb{Q}$. Then we have
\begin{align}\label{eq_pretentious}
\lim_{X\to \infty} \E_{n\leqslant X}^{\log} \prod\limits_{i=1}^k f_i(\lfloor \alpha _i n +\beta_i\rfloor)=\prod_{i=1}^k \lim_{X\to \infty}\E_{n\leqslant X}^{\log} f_i(n).
\end{align}
\end{lemma}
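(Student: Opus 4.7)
The plan is to reduce, via the structure theory of pretentious real-valued multiplicative functions, to the case where each $f_i$ is approximately periodic, and then invoke joint Kronecker--Weyl equidistribution. A preliminary observation is that every real-valued pretentious $f_i:\N\to[-1,1]$ must pretend to a real Dirichlet character $\chi_i$, say of modulus $q_i$: if $f_i$ pretended to $\chi(n)n^{it}$, then since $f_i=\overline{f_i}$, it would also pretend to $\overline{\chi}(n)n^{-it}$; by the triangle inequality for the Granville--Soundararajan pretentious distance $\mathbb{D}$, this forces $\mathbb{D}(\chi n^{it},\overline{\chi}n^{-it})<\infty$, which via the estimate $\sum_p(1-\cos(2t\log p))/p=\infty$ for $t\neq 0$ gives $t=0$ and $\chi^2$ principal.

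Writing $f_i=\chi_i h_i$ (in an appropriate multiplicative sense) where $\sum_p(1-h_i(p))/p<\infty$, I would approximate $h_i$ in logarithmic $L^1$-mean by a truncation $h_i^{(P)}$ agreeing with $h_i$ at primes $p\leq P$ and equal to $1$ at primes $p>P$; standard Euler-product bounds give $\limsup_{X\to\infty}\E_{n\leq X}^{\log}|h_i(n)-h_i^{(P)}(n)|\to 0$ as $P\to\infty$. Each $\chi_i(n)h_i^{(P)}(n)$ then depends only on $n$ modulo some $Q_i=Q_i(P)$. Since $1,\alpha_1,\ldots,\alpha_k$ are $\Q$-linearly independent, so are $1,\alpha_1/Q_1,\ldots,\alpha_k/Q_k$, and Kronecker--Weyl yields joint uniform equidistribution of $(\lfloor\alpha_in+\beta_i\rfloor\bmod Q_i)_{i=1}^k$ on $\prod_i\Z/Q_i\Z$. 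Consequently
\[\lim_{X\to\infty}\E_{n\leq X}^{\log}\prod_{i=1}^k\chi_i(\lfloor\alpha_in+\beta_i\rfloor)h_i^{(P)}(\lfloor\alpha_in+\beta_i\rfloor)=\prod_{i=1}^k M(\chi_ih_i^{(P)}),\]
where each $M(\chi_ih_i^{(P)})$ is the logarithmic mean on $\N$, and converges to $M(f_i)$ as $P\to\infty$ (by the same Kronecker--Weyl computation applied in the $k=1$ case on $\N$).

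The main technical obstacle will be propagating the logarithmic $L^1$-approximation errors through the $k$-fold product correlation. Since all functions in play are bounded by $1$, a telescoping triangle inequality reduces this to bounding $\limsup_{X\to\infty}\E_{n\leq X}^{\log}|h_i(\lfloor\alpha_in+\beta_i\rfloor)-h_i^{(P)}(\lfloor\alpha_in+\beta_i\rfloor)|$ for each $i$. This is a $k=1$ Beatty-sequence estimate for a non-negative multiplicative function, and can be handled via the Wirsing-type mean-value theorem combined with equidistribution of $\lfloor\alpha_in+\beta_i\rfloor$ modulo prime powers up to $P$ (noting that for large primes outside the $P$-support of $h_i^{(P)}$, the contribution to $h_i-h_i^{(P)}$ is controlled by the tail $\sum_{p>P}(1-h_i(p))/p\to 0$). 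Letting $P\to\infty$ then completes the proof.
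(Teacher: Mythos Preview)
Your overall strategy coincides with the paper's: approximate each $f_i$ in logarithmic $L^1$ by a periodic function, then apply Kronecker--Weyl joint equidistribution to factor the correlation. The paper achieves the first step by quoting a theorem of Daboussi--Delange, which asserts directly that a real pretentious multiplicative function is Besicovitch almost periodic (i.e.\ approximable by trigonometric polynomials with rational frequencies, equivalently by periodic sequences).

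Your attempt to construct the periodic approximation by hand, however, contains a gap. You assert that ``each $\chi_i(n)h_i^{(P)}(n)$ then depends only on $n$ modulo some $Q_i=Q_i(P)$'', but this is false for the truncation you describe. Whether $h_i^{(P)}$ is taken to agree with $h_i$ at all prime powers $p^j$ with $p\leq P$, or is the completely multiplicative extension from the primes $p\leq P$, the value $h_i^{(P)}(n)$ depends on the $p$-adic valuations $v_p(n)$ for $p\leq P$, and these are \emph{not} periodic functions of $n$. Concretely, if $h_i(2)=\tfrac12$ then $h_i^{(P)}(2^j)$ runs through infinitely many distinct values as $j$ varies, so no finite modulus $Q_i$ can work.

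The repair is easy but must be made explicit: additionally cap the exponents, defining $\widetilde h_i(p^j)=h_i(p^{\min(j,K)})$ for $p\leq P$ and $\widetilde h_i(p^j)=1$ for $p>P$. This $\widetilde h_i$ \emph{is} periodic with period dividing $\prod_{p\leq P}p^{K}$, and it agrees with your $h_i^{(P)}$ off the set $\{n:\exists\, p\leq P,\ p^{K+1}\mid n\}$, which has logarithmic density $O\bigl(\sum_{p\leq P}p^{-K-1}\bigr)\to 0$ as $K\to\infty$. With this extra truncation (and a little care at the primes dividing the conductor of $\chi_i$, where the factorisation $f_i=\chi_i h_i$ also breaks down since $\chi_i$ vanishes there), your argument goes through and amounts to a direct proof of the Daboussi--Delange almost-periodicity input that the paper invokes as a black box.
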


\begin{proof}
From \cite[Theorem 6]{daboussi-delange} it follows that $f_i$ is almost periodic in the following sense: for any $\varepsilon>0$ there exist a decomposition
\begin{align*}
 f_i(n)=T_{\varepsilon,i}(n)+\mathcal{E}_{\varepsilon,i}(n),   
\end{align*}
where $T_{\varepsilon,i}(x)=\sum_{1\leq \ell\leq L_{\varepsilon,i}}c_{\varepsilon,i}(\ell)e(\gamma_{\ell,\varepsilon,i}x)$ for some $L_{\varepsilon,i}$, some real numbers $c_{\varepsilon,i}(\ell)$ and some rational numbers $\gamma_{\ell,\varepsilon,i}$, and $\limsup_{X\to \infty}\E_{n\leq X}^{\log}|\mathcal{E}_{\varepsilon,i}(n)|\leq \varepsilon$. Therefore, it suffices to prove for any rational numbers $\gamma_i$ that
\begin{align*}
\lim_{X\to \infty} \E_{n\leqslant X}^{\log} \prod\limits_{i=1}^k e(\gamma_i\lfloor \alpha _i n +\beta_i\rfloor)=\prod_{i=1}^k \lim_{X\to \infty}\E_{n\leqslant X}^{\log} e(\gamma_i n).
\end{align*}

Let $\gamma_i=a_i/d_i$ with $a_i$ and $d_i\geq 1$ integers. By writing $e(\gamma_i m)$ as a linear combination of the indicators $1_{m\equiv c\pmod{d_i}}$, it suffices to show for any integers $c_i,d_i\geq 1$ that  
\begin{align*}
\lim_{X\to \infty} \E_{n\leqslant X}^{\log} \prod\limits_{i=1}^k 1_{\lfloor \alpha _i n +\beta_i\rfloor\equiv c_i\pmod{d_i}}=\prod_{i=1}^k \lim_{X\to \infty}\E_{n\leqslant X}^{\log} 1_{n\equiv c_i\pmod{d_i}}=\frac{1}{d_1\cdots d_k}.
\end{align*}
Observe that $\lfloor \alpha n+\beta\rfloor\equiv c\pmod d$ for $0\leq c<d$ is equivalent to $\{\frac{\alpha}{d} n+\frac{\beta}{d}\}\in [\frac{c}{d},\frac{c+1}{d})$. Hence, it sufices to show that 
\begin{align*}
\lim_{X\to \infty} \E_{n\leqslant X}^{\log} \prod\limits_{i=1}^k 1_{\{\frac{\alpha _i}{d_i} n +\frac{\beta_i}{d_i}\}\in [\frac{c_i}{d_i},\frac{c_i+1}{d_i})}=\frac{1}{d_1\cdots d_k}.
\end{align*}
But this follows from the Kronecker--Weyl theorem since the numbers $1,\alpha_1/d_1,\ldots, \alpha_k/d_k$ are linearly independent over $\mathbb{Q}$.
\end{proof}

\section{Proofs of Theorem~\ref{thm_bohrchowla} and Theorem~\ref{thm_highercorrelationsandBohr}}
Understanding the correlations of non-pretentious multiplicative functions restricted to Bohr sets is straightforward, given the previous lemmas. 
\begin{proof}[Proof of Theorem~\ref{thm_bohrchowla}] Let $B\in \mathcal{B}_{\textnormal{convex}}$ and $\varepsilon >0$. Let $f_1,f_2:\mathbb{N}\to [-1,1]$ be multiplicative with $f_1$ non-pretentious. For any $\gamma \in \mathbb{R}$ we have \begin{align*}
\lim_{X\to \infty}\mathbb{E}_{n\leq X}^{\log}f_1(a_1n+h_1)f_2(a_2n+h_2)e(\gamma n)=0 
\end{align*} 
by Lemma~\ref{le_frantzikinakis}. Therefore, from Corollary~\ref{cor_approx} and the triangle inequality, if $X$ is large enough depending on $\varepsilon$,
\begin{align*}
\mathbb{E}_{n\leq X}^{\log}f_1(a_1n+h_1)f_2(a_2n+h_2)1_{B}(n)=O(\varepsilon). \end{align*}
Since $\varepsilon$ was arbitrary, Theorem~\ref{thm_bohrchowla} follows. 
\end{proof}
\begin{proof}[Proof of Theorem~\ref{thm_highercorrelationsandBohr}]
Let $B\in \mathcal{B}_{\textnormal{convex}}$ and $\varepsilon >0$. Let $f_1,\ldots,f_k:\mathbb{N}\to [-1,1]$ be multiplicative with $f_1$ non-pretentious. By Lemma~\ref{le_approximation} we write \[1_B(n) = \sum\limits_{l \leqslant L_{\varepsilon}} c_{\varepsilon}(l) e(\gamma_{l,\varepsilon} n) + \sum\limits_{a \leqslant q} t_a 1_{n \equiv a \, \text{mod } q} + \mathcal{E}_{\varepsilon}(n),\] where $\limsup_{X\to \infty}\E_{n \leqslant X}|\mathcal{E}_{\varepsilon}(n)|\leq \varepsilon$, $\gamma_{l,\varepsilon} \notin \mathbb{Q}$ for all $l$, $\vert c_{\varepsilon}(l)\vert \ll_{\varepsilon} 1$, $t_a \geqslant 0$ for all $a$, and $\frac{1}{q}\sum_{a \leqslant q} t_a = \delta_B + O(\varepsilon)$.  Parametrising the progression $n \equiv a \, \text{mod } q$, and using partial summation to pass from $\E_{n \leqslant X} \vert \mathcal{E}_{\varepsilon}(n)\vert$ to $\E_{n \leqslant X}^{\log} \vert \mathcal{E}_{ \varepsilon}(n)\vert$, we have
 \begin{align*}
 &\vert \E_{n \leqslant X}^{\log} 1_B(n) \prod\limits_{i=1}^k f_i(a_in + h_i)\vert\\
  &\leqslant \sum_{l\leq L_{\varepsilon}} \vert c_{\varepsilon}(l)\vert \vert \E_{n\leqslant X}^{\log} e(\gamma_{l,\varepsilon} n) \prod_{i=1}^k f_i(a_in + h_i) \vert + \sum\limits_{a \leqslant q}  \frac{t_a}{q} \vert\E_{m \leqslant \frac{X}{q}}^{\log} \prod_{i=1}^k f_i(a_i(qm + a) + h_i)\vert +  O(\varepsilon).
  \end{align*}
\noindent By combining the different parts of Lemma~\ref{le_frantzikinakis}, using critically the fact that $\gamma_{l,\varepsilon} \notin \mathbb{Q}$, there is some $\eta>0$ (fixed, independently of $X$ and $\varepsilon$) for which the above is \[ \leqslant o_{\varepsilon}(1) + \delta_B(1 - 2\eta) + O(\varepsilon).\] Picking $\varepsilon$ small enough and $X$ large enough, we obtain an upper bound of $\delta_B(1 - \eta)$ as required. 
\end{proof}

\section{Proof of Theorem~\ref{thm_twopoint}(1)--(2)}

By Lemma~\ref{le_pretentious}, we have Theorem~\ref{thm_twopoint}(1) in the case where $f_1,f_2$ are pretentious. We shall show that if $f_2$ is non-pretentious, then Theorem~\ref{thm_twopoint}(1) holds under the weaker assumption that $\alpha_1/\alpha_2$ is irrational. 

By the fact that $f_2$ is non-pretentious and real-valued, we have 
$$\sum_{p}\frac{1-\textnormal{Re}(f_2(p)\overline{\chi}(p)p^{-it})}{p}=\infty$$
for any real number $t$ and Dirichlet character $\chi$ (see~\cite[Lemma C.1]{MRT}). Hence, we have $\lim_{X\to \infty}\E_{n\leq X}^{\log}f_2(n)=0$ by Hal\'asz's theorem (\cite[Theorem 4.5 in Section III.4]{tenenbaum}). Now it suffices to show that
\begin{align*}
\lim_{X\to \infty} \E_{n\leqslant X}^{\log} f_1(\lfloor \alpha_1 n+\beta_1 \rfloor)f_2(\lfloor \alpha_2 n+\beta_2 \rfloor)=0.
 \end{align*}
Once we have shown this, Theorem~\ref{thm_twopoint}(2) also follows. 

We first reduce the correlation in Theorem~\ref{thm_twopoint}(1) to simpler correlations of the form
\begin{align*}
\mathbb{E}_{n\leq X}^{\log}f_1(n)f_2(\lfloor \alpha n+\beta \rfloor)1_{B}(n),    
\end{align*}
where $B\in \mathcal{B}_{\textnormal{convex}}$ is a Bohr set. To this end, we begin with the following lemma.

\begin{lemma}\label{le_bohr1}
Fix $\alpha_1,\alpha_2>0$ and $\beta_1,\beta_2\in \mathbb{R}$, and suppose that $\alpha_1/\alpha_2$ is irrational.  Then, there exist $M\in \mathbb{N}$ and linear polynomials $L_1,\ldots, L_M:\mathbb{R}\to \mathbb{R}$ of the form $L_i(x)=(\alpha_2/\alpha_1)x+n_i$ with $n_i\in \mathbb{Z}$ and a partition $A_1\sqcup A_2\sqcup \cdots \sqcup A_M$ of $\mathbb{N}$ such that
 \begin{enumerate}
    
    \item For any $1\leq i\leq M$, we have
 \begin{align*}
\lfloor \alpha_2 n+\beta_2\rfloor=\lfloor L_i(\lfloor \alpha_1 n+\beta_1\rfloor)\rfloor\quad \textnormal{whenever}\quad n\in A_i.     \end{align*}
\item For any $1\leq i\leq M$ and $\varepsilon>0$, there exist $J_{\varepsilon}\geq 1$, Bohr sets $B_{i,j,\varepsilon}\in \mathcal{B}_{2,\textnormal{convex}}$ for $j \leqslant J_{\varepsilon}$, and a decomposition
\begin{align*}
1_{A_i}(n)=\sum_{j\leq J_{\varepsilon}}1_{B_{i,j,\varepsilon}}(n)+\mathcal{E}_{i,\varepsilon}(n),    
\end{align*}
 where
 \begin{align*}
\limsup_{X\to \infty}\mathbb{E}_{n\leq X}|\mathcal{E}_{i,\varepsilon}(n)|\leq \varepsilon.     
 \end{align*}
  \end{enumerate}
\end{lemma}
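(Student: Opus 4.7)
The plan is to make the polynomials $L_i$ explicit by analysing the integer-valued quantity
\begin{align*}
D(n) := \lfloor \alpha_2 n + \beta_2 \rfloor - \Big\lfloor \frac{\alpha_2}{\alpha_1}\lfloor \alpha_1 n + \beta_1 \rfloor \Big\rfloor.
\end{align*}
Writing $u := \{\alpha_1 n + \beta_1\}$, $v := \{\alpha_2 n + \beta_2\}$, and $\delta := \beta_2 - (\alpha_2/\alpha_1)\beta_1$, a direct substitution gives $(\alpha_2/\alpha_1)\lfloor \alpha_1 n + \beta_1 \rfloor = \alpha_2 n + \beta_2 - \delta - (\alpha_2/\alpha_1)u$, and the elementary identity $\lfloor a\rfloor - \lfloor b\rfloor = (a-b) - (\{a\} - \{b\})$ then simplifies the resulting expression to
\begin{align*}
D(n) = -\big\lfloor v - \delta - (\alpha_2/\alpha_1)u \big\rfloor.
\end{align*}
Since $(u,v)\in[0,1)^2$, the argument of the outer floor is confined to a bounded interval, and $D$ takes only finitely many integer values $n_1,\ldots,n_M$ (with $M$ depending only on $\alpha_1,\alpha_2,\beta_1,\beta_2$). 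I would then set $A_i := \{n \in \mathbb{N} : D(n) = n_i\}$ and $L_i(x) := (\alpha_2/\alpha_1)x + n_i$; property (1) is immediate because $n_i \in \mathbb{Z}$ commutes with the outer floor, so for $n \in A_i$ one has $\lfloor L_i(\lfloor\alpha_1 n+\beta_1\rfloor)\rfloor = \lfloor (\alpha_2/\alpha_1)\lfloor\alpha_1 n+\beta_1\rfloor\rfloor + n_i = \lfloor\alpha_2 n + \beta_2\rfloor$.

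For property (2), the condition $n \in A_i$ is equivalent to $(u,v) \in R_i$, where
\begin{align*}
R_i := \{(u,v)\in [0,1)^2 : -n_i \leq v - \delta - (\alpha_2/\alpha_1)u < -n_i + 1\}.
\end{align*}
This is the intersection of the half-open unit square with a half-open strip, hence a convex subset of $[0,1)^2$. Since $(u,v)$ is the image of $(\alpha_1 n+\beta_1,\ \alpha_2 n+\beta_2)$ in the torus, the condition becomes $(\alpha_1 n,\alpha_2 n)\in C \bmod \mathbb{Z}^2$, where $C := R_i - (\beta_1,\beta_2)\subset \mathbb{R}^2$ is convex. To land in $\mathcal{B}_{2,\textnormal{convex}}$, I chop $C$ along the integer lattice: for each $(k,l) \in \mathbb{Z}^2$, set $S_{k,l} := (C - (k,l)) \cap [0,1)^2$, a convex subset of $[0,1)^2$. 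Only finitely many (indeed at most four) pairs $(k,l)$ give a nonempty $S_{k,l}$, since $C$ is bounded.

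The only point requiring verification is that the pieces are disjoint on the torus. This follows because $C$ lies inside the half-open $1\times 1$ box $[-\beta_1, 1-\beta_1) \times [-\beta_2, 1-\beta_2)$, so any two distinct points of $C$ differ by a vector strictly inside $(-1,1)^2$ and therefore cannot differ by a nonzero integer vector. Hence the projection $C \to [0,1)^2$ is injective, the sets $S_{k,l}$ are pairwise disjoint with union equal to the image of $C$, and one obtains the \emph{exact} decomposition
\begin{align*}
1_{A_i}(n) = \sum_{(k,l)} 1_{B_2((\alpha_1,\alpha_2),\, S_{k,l})}(n),
\end{align*}
giving property (2) with $\mathcal{E}_{i,\varepsilon} \equiv 0$ and with the number of Bohr sets bounded independently of $\varepsilon$. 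The whole argument is thus elementary floor-function arithmetic together with a short piece of torus geometry; the only mildly subtle step is the wrap-around, which is handled cleanly by the observation that $R_i$ is contained in a box of sides of length at most one.
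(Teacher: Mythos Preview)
Your argument is correct. For part (1) you and the paper set up the same partition $\{A_i\}$ indexed by the possible values of $D(n)$, and your computation $D(n)=-\lfloor v-\delta-(\alpha_2/\alpha_1)u\rfloor$ agrees with the paper's description of $A_i$ in terms of the fractional parts $u=\{\alpha_1 n+\beta_1\}$ and $v=\{\alpha_2 n+\beta_2\}$.

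For part (2) the approaches diverge. The paper discretises the $v$-variable into $K$ subintervals, writing $1_{A_i}(n)$ as a sum of products of one-dimensional Bohr conditions plus a boundary error $1_{u_i-\gamma\{\alpha_1 n+\beta_1\}-\alpha_2 n-\beta_2\in[-1/K,1/K]\bmod 1}$; showing this error is $o_{K\to\infty}(1)$ requires Kronecker--Weyl and the irrationality of $\gamma=\alpha_2/\alpha_1$. You instead observe that the condition $n\in A_i$ is exactly $(u,v)\in R_i$ with $R_i$ a strip intersected with $[0,1)^2$, hence convex; translating by $(\beta_1,\beta_2)$ and reducing the resulting set $C$ modulo $\mathbb{Z}^2$ gives at most four convex pieces $S_{k,l}\subset[0,1)^2$. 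The injectivity argument (since $C$ sits inside a half-open unit box, no two points of $C$ are congruent modulo $\mathbb{Z}^2$) is correct and yields the disjointness of the Bohr sets $B_2((\alpha_1,\alpha_2),S_{k,l})$. You thus obtain an \emph{exact} decomposition with $\mathcal{E}_{i,\varepsilon}\equiv 0$ and a number of Bohr sets independent of $\varepsilon$, and you never invoke the irrationality of $\alpha_1/\alpha_2$. This is strictly cleaner than the paper's route; the paper's approximate decomposition is sufficient for the downstream application, but your observation that $R_i$ is already convex in two dimensions removes the need for the discretisation step entirely.
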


\begin{proof}
Let $\gamma=\alpha_2/\alpha_1$. Write
\begin{align}\label{eq:alphabeta}
\alpha_2n+\beta_2=\gamma\lfloor \alpha_1 n+\beta_1\rfloor+r_n,
\end{align}
where
\begin{align}\label{eq:rn}
r_n=\beta_2-\gamma\beta_1+\gamma\{\alpha_1 n+\beta_1\}. 
\end{align}
We have $|r_n|\leq R$ for all $n$ for some $R\ll_{\alpha_i,\beta_i}1$. Therefore, for each $n$ there exists an integer $i\in [-R,R]$ such that 
\begin{align*}
 \lfloor\alpha_2n+\beta_2 \rfloor=\lfloor \gamma\lfloor \alpha_1 n+\beta_1\rfloor+r_n\rfloor=\lfloor \gamma\lfloor \alpha_1 n+\beta_1\rfloor+i\rfloor.  
\end{align*}
Now let $L_i(x):=\gamma x+i$. Consider the sets
\begin{align*}
A_i:&=\{n:\,\, \lfloor \alpha_2 n+\beta_2\rfloor=\lfloor L_i(\lfloor \alpha_1 n+\beta_1\rfloor)\rfloor\}.    
\end{align*}
The sets $A_i$ form a partition of $\mathbb{N}$, and note that by~\eqref{eq:alphabeta},~\eqref{eq:rn} we have
\begin{align*}
A_i&=\{n:\,\, \lfloor \alpha_2 n+\beta_2\rfloor=\lfloor (\alpha_2n+\beta_2)+i+\gamma\beta_1-\gamma\{\alpha_1n+\beta_1\}\rfloor\}\\
&=\{n:\,\, -\{ \alpha_2 n+\beta_2\}\leq i + \gamma\beta_1-\gamma\{\alpha_1n+\beta_1\}<1-\{ \alpha_2 n+\beta_2\}\},
\end{align*}
where we used the fact that $\lfloor x+y\rfloor=\lfloor x\rfloor$ if and only if $-\{x\}\leq y<1-\{x\}$.

Now, let $\varepsilon>0$ and let $K\geq 1$ be large in terms of $\varepsilon$. For brevity, write $u_i=i+\gamma\beta_1$. Then we can write
\begin{align*}
1_{A_i}(n)&=\sum_{0\leq k\leq K-1} 1_{\alpha_2n+\beta_2\in [k/K,(k+1)/K)\, \text{mod }1}1_{u_i-\gamma\{\alpha_1n+\beta_1\}\in (-k/K,1-k/K) }\\
&+O\left(1_{u_i-\gamma\{\alpha_1n+\beta_1\}-\alpha_2n-\beta_2\in [-1/K,1/K]\, \text{mod } 1}\right).    
\end{align*}
Each term inside the $k$ sum can be written as the sum of indicator functions of elements of $\mathcal{B}_{2,\textnormal{convex}}$. Moreover, since $\gamma$ is irrational,  by the Kronecker--Weyl theorem we have
\begin{align}
\label{eq_Weylerror}
 \limsup_{X\to \infty}\mathbb{E}_{n\leq X}1_{u_i-\gamma\{\alpha_1n+\beta_1\}-\alpha_2n-\beta_2\in [-1/K,1/K]\mod 1}=o_{K\to \infty}(1).    
\end{align}
Indeed, expressing $\{\alpha_1n + \beta_1\} = \alpha_1n + \beta_1 - \lfloor \alpha_1n + \beta_1 \rfloor$, it is enough to show that for any interval $I$ modulo 1 with length $O(1/K)$, \[ \limsup_{X \rightarrow \infty} \E_{n \leqslant X} 1_{\gamma \lfloor \alpha_1 n + \beta_1 \rfloor \in I \, \text{mod } 1} = o_{K \rightarrow \infty}(1).\] But since $\alpha_1 > 0$ the sequence $(\lfloor \alpha_1 n + \beta_1 \rfloor)_{n \leqslant X}$ contains integers at most $\alpha_1 X + \beta_1$ and at least $\lfloor \beta_1 \rfloor$, and the multiplicity of the sequence is at most $\lfloor \alpha_1^{-1} \rfloor + 1$. Therefore \[\E_{n \leqslant X} 1_{ \gamma \lfloor \alpha_1 n + \beta_1 \rfloor \in I \, \text{mod } 1} \ll \E_{n \leqslant \alpha_1 X} 1_{\gamma n \in I \, \text{mod } 1} + O(1/X) \ll \frac{1}{K}\] by Kronecker--Weyl (for large enough $X$). 

Thus~\eqref{eq_Weylerror} holds and the claim follows.
\end{proof}

Applying Lemma~\ref{le_bohr1}, we can write
\begin{align*}
&\mathbb{E}_{n\leq X}^{\log}f_1(\lfloor \alpha_1 n+\beta_1\rfloor) f_2(\lfloor \alpha_2 n+\beta_2\rfloor)\nonumber\\
&=\sum_{i\leq M}\sum_{j\leq J}\mathbb{E}_{n\leq X}^{\log}f_1(\lfloor \alpha_1 n+\beta_1\rfloor) f_2(\lfloor L_i(\lfloor \alpha_1 n+\beta_1\rfloor)\rfloor)1_{B_{i,j,J}}(n)+o_{X,J\to \infty}(1)
\end{align*}
for some Bohr sets $B_{i,j,J}\in \mathcal{B}_{2,\textnormal{convex}}$ and some linear polynomials $L_i:\mathbb{R}\to \mathbb{R}$ having leading coefficient $\alpha_2/\alpha_1$. Hence, it suffices to show that
\begin{align}\label{eq2}
\mathbb{E}_{n\leq X}^{\log}f_1(\lfloor \alpha_1 n+\beta_1\rfloor) f_2(\lfloor L(\lfloor \alpha_1 n+\beta_1\rfloor)\rfloor)1_{B}(n)=o(1)
\end{align}
for any $B\in \mathcal{B}_{\textnormal{convex}}$ and any polynomial $L(x)=\theta x+j$ with $j\in \mathbb{Z}$, where $\theta=\alpha_1/\alpha_2$.

For any $B\in \mathcal{B}_{\textnormal{convex}}$ and $\gamma \in \mathbb{R}$, introduce a multiplicity counting function
\begin{align*}
N_{B,\alpha,\beta,\gamma}(m):= \sum\limits_{n \in B: \,\, m=\lfloor \alpha n+\beta\rfloor} e(\gamma n).   
\end{align*}
Then, making a change of variables, we can rewrite the left-hand side of~\eqref{eq2} as
\begin{align*}
 \mathbb{E}_{m\leq \alpha_1 X}^{\log}f_1(m) f_2(\lfloor L(m)\rfloor)N_{B,\alpha_1,\beta_1,0}(m)+o(1).  
\end{align*}
We then need the following lemma on the structure of $N_{B,\alpha,\beta, \gamma}(m)$ (which is a version of Corollary~\ref{cor_approx} for $N_{B,\alpha,\beta, \gamma}(m)$). 

\begin{lemma}\label{le_bohr2} Fix $B\in \mathcal{B}_{\textnormal{convex}}$, $\alpha>0$ and $\beta, \gamma\in \mathbb{R}$. Then, for any $\varepsilon>0$, there exists some $K_{\varepsilon}\geq 1$, some sequence of real numbers $(\gamma_{k,\varepsilon})_{k\geq 1}$ and some complex numbers $c_{\varepsilon}(k)$ with $|c_{\varepsilon}(k)|\ll_{\varepsilon} 1$ such that for all $m \in \mathbb{Z}$
\begin{align*}
N_{B,\alpha,\beta,\gamma}(m)=\sum_{1\leq k\leq K_{\varepsilon}} c_{\varepsilon}(k)e(\gamma_{k,\varepsilon}m) + \mathcal{E}_{\varepsilon}(m) 
\end{align*}
and $\limsup_{X\to \infty}\mathbb{E}_{m\leq X}|\mathcal{E}_{\varepsilon}(m)|\leq \varepsilon$. 
\end{lemma}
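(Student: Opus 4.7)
The plan is to mimic the strategy of the proof of Lemma~\ref{le_approximation}: express $N_{B,\alpha,\beta,\gamma}(m)$ as a bounded function of a torus orbit in $m$, then apply the same Lipschitz-then-Fourier approximation machinery. Set $\theta := 1/\alpha$, $\tau := \beta/\alpha$, $s_m := \{m\theta - \tau\}$, and write $B = B_d(\eta, U)$ with $U \subset [0,1)^d$ convex. The integers $n$ with $\lfloor \alpha n + \beta\rfloor = m$ are precisely those in $[m\theta - \tau,\, m\theta - \tau + \theta)$, which (outside the density-zero exceptional set $\{m : s_m = 0\}$) can be enumerated as
\[
n_{m,j} \, := \, m\theta - \tau + (1 - s_m) + j, \qquad j = 0, 1, \ldots, \lceil \theta \rceil,
\]
with $n_{m,j}$ lying in the window if and only if $s_m \in I_j := (\max(0,1 + j - \theta), 1)$.

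Since each valid $n_{m,j}$ is an integer, we compute
\[
e(\gamma n_{m,j}) = e(\gamma\theta m)\cdot e\bigl(\gamma(1+j-\tau - s_m)\bigr), \qquad \eta n_{m,j} \equiv \eta\theta m + \eta(1+j-\tau-s_m) \pmod{\mathbb{Z}^d},
\]
so, setting $\vartheta := (\theta,\,\eta\theta)\in \mathbb{R}^{d+1}$, outside a set of density zero we obtain
\[
N_{B,\alpha,\beta,\gamma}(m) \, = \, e(\gamma\theta m) \cdot \Psi(m\vartheta \bmod \mathbb{Z}^{d+1}),
\]
where $\Psi\colon \mathbb{R}/\mathbb{Z} \times \mathbb{R}^d/\mathbb{Z}^d \to \mathbb{C}$ is a sum of at most $\lceil \theta \rceil + 1$ bounded terms, each of the form $1_{I_j}(s) \cdot g_j(s) \cdot 1_U(y + v_j(s) \bmod \mathbb{Z}^d)$ with $g_j$ continuous and $v_j$ affine in $s$. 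The set $\{(s,y) \in [0,1)^{d+1}:\ s \in I_j,\ y + v_j(s) \in U \bmod \mathbb{Z}^d\}$ is the image in a fundamental domain of a convex subset of $\mathbb{R}^{d+1}$, so breaks into at most $O_\alpha(1)$ disjoint convex pieces (cf.\ the proof of Lemma~\ref{le_ratdepend}).

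From here the proof is a direct transcription of that of Lemma~\ref{le_approximation}. Apply Lemma~\ref{le_ratdepend} to the sequence $(m\vartheta \bmod \mathbb{Z}^{d+1})_m$ to reduce to a lower-dimensional torus $\mathbb{R}^{d'}/\mathbb{Z}^{d'}$ on which the orbit totally equidistributes, together with a splitting into $O_\alpha(1)$ residue classes modulo some $q$. Then \cite[Corollary A.3]{GT_linear} supplies a Lipschitz approximation to each convex-set indicator with $L^1$ error $O(\varepsilon)$ and Lipschitz constant $O_\varepsilon(1)$, and \cite[Lemma A.9]{GT_quadratic} expands the resulting Lipschitz function as a trigonometric polynomial of length $K_\varepsilon$ with uniform error $O(\varepsilon)$. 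Folding in the outer phase $e(\gamma\theta m)$ and expanding the $O_\alpha(1)$ residue-class indicators as a sum of phases $e(rm/q)$ produces a trigonometric polynomial of the desired form, with the $L^1$ error controlled via Kronecker--Weyl equidistribution on the reduced torus. The main technical step is the explicit identification of $\Psi$ above, together with the verification that the regions involved really are images of convex bodies in $\mathbb{R}^{d+1}$; once this is in place, all remaining estimates reduce to those already developed for Lemma~\ref{le_approximation}.
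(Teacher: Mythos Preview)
Your approach is correct and takes a genuinely different route from the paper. You package $N_{B,\alpha,\beta,\gamma}(m)$ as $e(\gamma\theta m)\cdot\Psi(m\vartheta\bmod\mathbb{Z}^{d+1})$ for a single function $\Psi$ on a $(d+1)$-dimensional torus, then run the full Lipschitz-then-Fourier machinery of Lemma~\ref{le_approximation} once on that object. The paper instead factors $N_{B,\alpha,\beta,\gamma}(m)$ as a product of four simpler pieces, namely $1_{A_i}(m)$, $1_B(\lceil (m-\beta)/\alpha\rceil+j)$, and $e(\gamma(\lceil (m-\beta)/\alpha\rceil+j))$, and approximates each factor separately by repeated black-box applications of Corollary~\ref{cor_approx}; the troublesome phase $e(\gamma\{\gamma_1m+\gamma_2\})$ is handled by discretising it into a weighted sum of one-dimensional Bohr-set indicators. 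Your route is more structural and does the work in one pass, at the cost of two small wrinkles the paper avoids: you must carry the continuous weight $g_j(s)$ through the Lipschitz approximation (fine, since $g_j$ is smooth on each convex piece and the product with a compactly supported Lipschitz approximant remains Lipschitz), and you must invoke the \emph{idea} of Lemma~\ref{le_ratdepend} rather than its literal statement, since $\Psi$ is not a Bohr-set indicator. The paper's factorisation is more modular and makes the reduction to existing lemmas entirely formal.

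One small gap to patch: the set $\{m:s_m=0\}$ need not have density zero when $\alpha\in\mathbb{Q}$ and $\tau\in\theta\mathbb{Z}+\mathbb{Z}$; in that case it is a full arithmetic progression. This is harmless, since on that progression everything is periodic and the contribution is itself a trigonometric polynomial, but the phrase ``density-zero exceptional set'' should be replaced by an explicit disposal of this case.
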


\begin{proof}
Note that there exists an integer $N\geq 0$ such that
\begin{align*}
\left|\left[\frac{m-\beta}{\alpha},\frac{m+1-\beta}{\alpha}\right)\cap \mathbb{Z}\right|\in \{N,N+1\}    
\end{align*}
for all $m \in \mathbb{Z}$. Let $A_1$ be the set of $m$ such that $|[(m-\beta)/\alpha,(m+1-\beta)/\alpha)\cap \mathbb{Z}|=N$, and let $A_2$ be the complement of this set.

We can write
\begin{align*}
N_{B,\alpha,\beta,\gamma}(m)&=\sum_{(m-\beta)/\alpha\leq n<(m+1-\beta)/\alpha}1_{B}(n)e(\gamma n),
\end{align*}
and this equals
\begin{align*}
\sum_{0\leq j\leq N-1}1_{A_1}(m)1_{B}(\lceil (m-\beta)/\alpha\rceil +j) e(\gamma (\lceil (m-\beta)/\alpha\rceil +j)) \\+\sum_{0\leq j\leq N}1_{A_2}(m)1_{B}(\lceil (m-\beta)/\alpha\rceil +j) e(\gamma (\lceil (m-\beta)/\alpha\rceil +j)).
\end{align*}
The claim will follow if we can show that the four functions $m \mapsto 1_{A_1}(m)$, $m \mapsto 1_{A_2}(m)$, $m \mapsto 1_{B}(\lceil (m-\beta)/\alpha\rceil +j)$ and $m \mapsto e(\gamma(\lceil (m-\beta)/\alpha\rceil +j))$ can each be approximated by trigonometric polynomials of length $O_{\varepsilon}(1)$ with bounded coefficients (up to an error term which is $O(\varepsilon)$ in the normalised $L^1$ norm on the interval $[1,X] \cap \mathbb{Z}$). 

First note that the sets $A_i$ are both disjoint unions of elements of $\mathcal{B}_{1,\textnormal{convex}}$ (in fact, they are unions of sets of the form $\{m:\,\, \left\{\frac{m-\beta}{\alpha}\right\}\in I_i\}$ for some intervals $I_i$). Corollary~\ref{cor_approx} then means that $1_{A_i}$ can be suitably approximated. Next observe that by applying Corollary~\ref{cor_approx} to $B$ one reduces the task of approximating the term $m \mapsto 1_{B}(\lceil (m-\beta)/\alpha\rceil +j)$ to approximating terms of the form $m \mapsto e(\gamma(\lceil (m-\beta)/\alpha\rceil +j))$ (for arbitrary $\gamma$).

To achieve this, we write \[ e(\gamma ( \lceil (m-\beta)/\alpha \rceil + j)) = e(\gamma j)e(\gamma \frac{m-\beta}{\alpha}) e(\gamma \Big\{ \frac{m-\beta}{\alpha} \Big\}),\] which reduces matters to decomposing $e(\gamma \Big\{ \frac{m-\beta}{\alpha} \Big\})$. Then observe that for a suitably large integer $L\geq \varepsilon^{-1}$, for any $\gamma,\gamma_1,\gamma_2 \in \mathbb{R}$ we have
\begin{align*}
e(\gamma \{\gamma_1m+\gamma_2\})=\sum_{0\leq \ell<L}e\left(\gamma \frac{\ell}{L}\right)1_{\{\gamma_1m+\gamma_2\}\in [\ell/L,(\ell+1)/L)}+O(\varepsilon)
\end{align*}
Thus, up to an acceptable error, we can write $e(\gamma \Big\{ \frac{m-\beta}{\alpha} \Big\})$ as a bounded $\mathbb{C}$-linear combination of indicator functions of Bohr sets in $\mathcal{B}_{\textnormal{convex}}$. Applying Corollary~\ref{cor_approx} to each of these Bohr sets, the result follows. 
\end{proof}

Applying Lemma~\ref{le_bohr2} to~\eqref{eq2}, and writing out $L(m) = \theta m + j$, we reduce matters to proving that 
\begin{align}\label{eq9}
\sup_{ \gamma}\lim_{X\to \infty}\left|\mathbb{E}_{m\leq \alpha_1 X}^{\log}f_1(m)f_2(\lfloor \theta m + j\rfloor)e(\gamma m)\right|=0 
\end{align}
We are now in a position to apply the orthogonality criterion of K\'atai--Bourgain--Sarnak--Ziegler~\cite{BSZ} for multiplicative functions. 

\begin{lemma}[Orthogonality criterion]\label{le_katai} Let $a:\mathbb{N}\to \mathbb{C}$ be a bounded sequence of complex numbers. Suppose that, for any $\varepsilon>0$, there exists $P\geq 1$ such that for any primes $P\leq p<q$, we have
\begin{align}\label{eq_katai1}
\limsup_{X\to \infty}\left|\mathbb{E}_{n\leq X}^{\log}a(pn)\overline{a(qn)}\right|\leq \varepsilon.  
\end{align}
Then, for any $1$-bounded multiplicative function $f:\mathbb{N}\to \mathbb{C}$, we have 
\begin{align}\label{eq_katai2}
\lim_{X\to \infty}\mathbb{E}_{n\leq X}^{\log}f(n)a(n)=0.    
\end{align}
\end{lemma}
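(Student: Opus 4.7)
The plan is to prove Lemma~\ref{le_katai} via the standard Kátai--Bourgain--Sarnak--Ziegler argument, rephrased for logarithmic averages. Fix $\varepsilon > 0$, let $P$ be as in the hypothesis, let $Y$ be a large parameter to be sent to infinity later, and set $\mathcal{P} := \{ p \text{ prime} : P \leq p \leq Y\}$. Write $W := \sum_{p \in \mathcal{P}} 1/p$, which by Mertens' theorem equals $\log\log Y - \log\log P + O(1)$ and so can be made arbitrarily large.

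The first step is a logarithmic Turán--Kubilius inequality: for $\omega_{\mathcal{P}}(n) := \#\{p \in \mathcal{P} : p \mid n\}$, one has
\[ \sum_{n \leq X} \frac{1}{n}(\omega_{\mathcal{P}}(n) - W)^2 \ll W \log X, \]
as can be shown by expanding the square, interchanging the order of summation, and using Mertens' theorem together with $\sum_p 1/p^2 = O(1)$. Combined with $|f(n) a(n)| \leq 1$ and Cauchy--Schwarz, this yields
\[ W \sum_{n \leq X} \frac{f(n) a(n)}{n} = \sum_{p \in \mathcal{P}} \sum_{\substack{n \leq X \\ p \mid n}} \frac{f(n) a(n)}{n} + O(\sqrt{W}\,\log X). \]
Writing $n = pm$ and invoking multiplicativity $f(pm) = f(p) f(m)$ when $(p,m) = 1$ (the contribution from $(p,m) > 1$ is $\ll (\log X) \sum_p 1/p^2 = O(\log X)$), the right-hand side equals
\[ \sum_{m \leq X} \frac{f(m)}{m} \sum_{\substack{p \in \mathcal{P} \\ p \leq X/m,\, p \,\nmid\, m}} \frac{f(p) a(pm)}{p} + O(\log X). \]

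The next step is Cauchy--Schwarz in $m$ using $|f(m)| \leq 1$:
\[ \left| \sum_{m \leq X} \frac{f(m)}{m} \sum_{p} \frac{f(p) a(pm)}{p} \right|^2 \leq (\log X) \sum_{m \leq X} \frac{1}{m} \left| \sum_{p \in \mathcal{P}} \frac{f(p) a(pm)}{p} \right|^2. \]
Expanding the square and swapping the order of summation produces a double sum
\[ \sum_{p, q \in \mathcal{P}} \frac{f(p) \overline{f(q)}}{pq} \sum_{m \leq X/\max(p,q)} \frac{a(pm) \overline{a(qm)}}{m}. \]
The diagonal $p = q$ contributes $\ll \log X \cdot \sum_p 1/p^2 = O(\log X)$, while the hypothesis~\eqref{eq_katai1} bounds the inner sum for each off-diagonal pair $(p,q)$ by $\varepsilon \log X + o_{p,q;\,X \to \infty}(\log X)$, giving a total off-diagonal contribution of $\ll \varepsilon W^2 \log X + o_{X\to\infty}(W^2 \log X)$. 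Chaining these estimates together and dividing by $W$, one deduces
\[ \left| \E^{\log}_{n \leq X} f(n) a(n) \right| \ll \sqrt{\varepsilon} + O(1/\sqrt{W}) + o_{X \to \infty;\,W\,\text{fixed}}(1), \]
and sending $X \to \infty$, then $W \to \infty$, and finally $\varepsilon \to 0$ gives~\eqref{eq_katai2}.

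The main subtlety is the order in which limits are taken: since the hypothesis~\eqref{eq_katai1} is only a $\limsup$ statement for each fixed $(p, q)$, one must send $X \to \infty$ first with $W$ fixed, before letting $Y$ (hence $|\mathcal{P}|$) grow. A secondary technical point is verifying the logarithmic Turán--Kubilius inequality with the stated error, but this follows directly from Mertens. The boundary contributions (from $pm > X$, from pairs with $(p,m) > 1$ where strict multiplicativity fails, and from $p^2 \mid n$) all produce tame errors of size $O(\log X)$, which are harmlessly absorbed upon division by $W$.
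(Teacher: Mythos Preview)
Your argument is correct and is the standard direct form of the K\'atai--Bourgain--Sarnak--Ziegler proof: logarithmic Tur\'an--Kubilius to replace the weight $1$ by $\omega_{\mathcal{P}}(n)/W$, multiplicativity to pull out $f(p)$, Cauchy--Schwarz in $m$, and the hypothesis on the off-diagonal. The paper instead argues by contradiction using Elliott's inequality~\cite[Lemma 4.7]{elliott} in place of Tur\'an--Kubilius, and then a pigeonhole step to extract $J\asymp\varepsilon^{-2}$ specific primes $p_j$ with $|\E_{n\leq X}^{\log}f(n)a(p_jn)|\geq \varepsilon/2$ before squaring. The two routes are well-known equivalent packagings of the same idea; your version is slightly more self-contained (you do not need to import Elliott's inequality), while the paper's pigeonhole variant avoids tracking the full double sum over $\mathcal{P}\times\mathcal{P}$. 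One cosmetic point: you use $|f(n)a(n)|\leq 1$, but the lemma only assumes $a$ bounded; this is harmless after rescaling $a$ by $\|a\|_\infty$. The boundary errors you list (from $pm>X$, from $p\mid m$, etc.) are indeed all $O_Y(\log X)$ or smaller and are absorbed on dividing by $W$, exactly as you say.
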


\begin{proof}
This can be deduced from~\cite[Lemma 2.16]{CHRST}. For the sake of completeness, we give a proof.

Suppose that $\varepsilon>0$ is small, $X$ is large enough in terms of $\varepsilon$, and $|\mathbb{E}_{n\leq X}^{\log}f(n)a(n)|\geq \varepsilon$. Let $Q$ be large enough in terms of $\varepsilon$ and $P$. By Elliott's inequality~\cite[Lemma 4.7]{elliott}, we have 
\begin{align*}
\mathbb{E}_{n\leq X}^{\log}f(n)a(n)=\frac{1}{\log\log Q} \sum_{p \leqslant X} \frac{1}{p}\mathbb{E}_{n\leq X}^{\log}f(pn)a(pn)+o_{Q\to \infty}(1).   
\end{align*}
Since $Q$ is large enough in terms of $\varepsilon$, the error term here is at most $\varepsilon/10$ in absolute value.
By the multiplicativity of $f$, we have $f(pn)=f(p)f(n)+O(1_{p\mid n})$, so we conclude that
\begin{align*}
\Big\vert\frac{1}{\log\log Q} \sum_{p \leqslant X} \frac{f(p)}{p}\mathbb{E}_{n\leq X}^{\log}f(n)a(pn) \Big\vert\geq \frac{4}{5}\varepsilon,  
\end{align*}
say. 

 Let $J=\lceil 10\varepsilon^{-2}\rceil$. Then, by the pigeonhole principle and the assumption that $Q$ is large, there exist distinct primes $P\leq p_1,\ldots, p_J\leq Q$ such that
\begin{align*}
\Big\vert\mathbb{E}_{n\leq X}^{\log}f(n)a(p_jn)\Big\vert\geq \frac{\varepsilon}{2}    
\end{align*}
for all $1\leq j\leq J$. Hence, there exist some unimodular complex numbers $c_j$ such that
\begin{align*}
 \sum_{j\leq J}c_j\mathbb{E}_{n\leq X}^{\log}f(n)a(p_jn)\geq \frac{\varepsilon J}{2}.    
\end{align*}
Exchanging the order of summation and then applying Cauchy--Schwarz, we deduce
\begin{align*}
\mathbb{E}_{n\leq X}^{\log}\Big|\sum_{j\leq J}c_ja(p_j n)\Big|^2\geq \frac{(\varepsilon J)^2}{4}. 
\end{align*}
Opening the square and separating the diagonal contribution, we obtain
\begin{align*}
  \sum_{\substack{i,j\leq J\\i\neq j}}c_i\overline{c_j}\mathbb{E}^{\log}_{n\leq X}a(p_i n)\overline{a(p_j n)}\geq \frac{(\varepsilon J)^2}{4}-J.  
\end{align*}
But recalling our choice of $J$, we obtain a contradiction with~\eqref{eq_katai1} (with $\varepsilon^2/8$ in place of $\varepsilon$). 
\end{proof}

By Lemma~\ref{le_katai}, to prove
\eqref{eq9} it suffices to show that for all fixed primes $p,q$ with $P \leqslant p < q$, that
\begin{align}\label{eq10}
\sup_{\gamma}\limsup_{X\to \infty}\left|\mathbb{E}_{n\leq X}^{\log}f_2(\lfloor p\theta n+j\rfloor)f_2(\lfloor q\theta n+j\rfloor)e(\gamma n)\right| =o_{P\rightarrow \infty}(1).
\end{align}
We continue with a lemma connecting $\lfloor p \theta n + j \rfloor$ and $\lfloor q \theta n + j\rfloor$ (in a similar spirit to Lemma~\ref{le_bohr1}). 
\begin{lemma}
\label{le_rationalratioBohr}
For all integers $p,q \geqslant 1$ and reals $\theta, \beta_p,\beta_q$, we have a finite partition $\mathbb{Z}=\mathcal{B}_1\sqcup \mathcal{B}_2\sqcup \cdots \sqcup \mathcal{B}_M$ such that $\mathcal{B}_i\in \mathcal{B}_{1,\textnormal{convex}}$ with
\begin{align*}
\lfloor q\theta n+\beta_q\rfloor=\frac{q\lfloor p\theta n+\beta_p\rfloor+r_i}{p}\quad \textnormal{whenever}\quad n\in \mathcal{B}_i    
\end{align*}
for some integers $r_i$. Furthermore, the phase of each $\mathcal{B}_i$ is $\theta$. 
\end{lemma}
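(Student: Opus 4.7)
The plan is to establish the pointwise identity
\begin{align*}
p\lfloor q\theta n+\beta_q\rfloor - q\lfloor p\theta n+\beta_p\rfloor = p\beta_q - q\beta_p - p\{q\theta n+\beta_q\}+q\{p\theta n+\beta_p\},
\end{align*}
which I would derive simply by writing $\lfloor y\rfloor = y-\{y\}$ on each side and noting that the leading $pq\theta n$ terms cancel. The left-hand side is an integer while the right-hand side is bounded (in modulus by $O(p+q+|\beta_p|+|\beta_q|)$), so this common quantity $r(n)$ takes only finitely many integer values as $n$ ranges over $\mathbb{Z}$.

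Next, I would observe that $r(n)$ depends only on $\theta n \bmod 1$: since $p,q\in\mathbb{Z}$, each of $\{q\theta n+\beta_q\}$ and $\{p\theta n+\beta_p\}$ is a $1$-periodic function of $\theta n$. This defines a map $F\colon \mathbb{R}/\mathbb{Z}\to\mathbb{Z}$ via
\begin{align*}
F(x) := p\beta_q - q\beta_p - p\{qx+\beta_q\}+q\{px+\beta_p\},
\end{align*}
satisfying $r(n)=F(\theta n \bmod 1)$. I would then show that $F$ is piecewise constant: away from the jumps of the two sawtooths its derivative is $-pq+qp=0$. The jump points of $F$ in the fundamental domain $[0,1)$ lie in the finite set $\{(m-\beta_q)/q \bmod 1\}\cup\{(m-\beta_p)/p \bmod 1\}$, which partitions $[0,1)$ into finitely many half-open intervals $I_1,\dots,I_M$ on each of which $F$ equals some integer $r_i$.

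Setting $\mathcal{B}_i := B_1(\theta, I_i) = \{n\in\mathbb{Z}: \theta n \in I_i \bmod 1\}$ then produces the required partition of $\mathbb{Z}$: each $I_i$ is a subinterval of $[0,1)$, hence convex, so $\mathcal{B}_i\in\mathcal{B}_{1,\textnormal{convex}}$ with phase $\theta$; and the $\mathcal{B}_i$ partition $\mathbb{Z}$ since the $I_i$ partition $[0,1)$. For $n\in\mathcal{B}_i$ we have $r(n)=r_i$, which rearranges to
\begin{align*}
\lfloor q\theta n+\beta_q\rfloor = \frac{q\lfloor p\theta n+\beta_p\rfloor+r_i}{p}
\end{align*}
as required (the right-hand side is automatically an integer because the left-hand side is). There is no substantive obstacle here; the argument is a direct calculation, and the only point deserving some care is checking that the cells of $F$, viewed in the fundamental domain $[0,1)$, genuinely are convex intervals rather than unions of arcs wrapping across $0$, which is immediate since all the breakpoints have been listed inside $[0,1)$.
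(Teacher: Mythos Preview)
Your proof is correct and follows essentially the same approach as the paper: both start from the identity $p\lfloor q\theta n+\beta_q\rfloor - q\lfloor p\theta n+\beta_p\rfloor = p\beta_q - q\beta_p + q\{p\theta n+\beta_p\} - p\{q\theta n+\beta_q\}$ and observe that the right-hand side depends only on $\theta n \bmod 1$ and is piecewise constant there. Your packaging via the single function $F(x)$ and its level sets is slightly more streamlined than the paper's version, which instead indexes the pieces by pairs $(i,j)$ recording $\lfloor p\{\theta n\}+\beta_p\rfloor$ and $\lfloor q\{\theta n\}+\beta_q\rfloor$ and then subdivides into intervals $U_{k,l}$; but the underlying computation and the resulting partition into Bohr sets with phase $\theta$ are the same.
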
 
\begin{proof}
We have\[p \lfloor q \theta n + \beta_q \rfloor - q \lfloor p \theta n + \beta_p \rfloor = p\beta_q - q \beta_p + q \{ p \theta n + \beta_p\} - p \{q \theta n + \beta_q\}.\] For $i,j \in \mathbb{Z}_{ \geqslant 0}$ we define \[B_{i,j} = \{n \in \mathbb{Z}: \, \{p \theta n + \beta_p\} = p\{\theta n\} + \beta_p -i, \, \{q \theta n + \beta_q \} = q \{\theta n \} + \beta_q - j\}.\] The $B_{i,j}$ form a partition of $\mathbb{Z}$, all but finitely many of the $B_{i,j}$ are empty, and each $B_{i,j}$ is a union of finitely many sets $\mathcal{B} \in \mathcal{B}_{1,\textnormal{convex}}$ with phase $\theta$; for example, sets of the form \[ \mathcal{B} = B_1(\theta, U_{k,l}), \qquad \text{where  }\, U_{k,l} = \Big[\frac{k}{p} , \frac{k+1 - \{\beta_p\}}{p} \Big)  \cap \Big[\frac{l}{q} , \frac{l+1 - \{\beta_q\}}{q} \Big) \] for integers $k \in [0,p-1]$ and $l \in [0,q-1]$. If $n \in B_{i,j}$, from the above formulas we have \[  p \lfloor q \theta n + \beta_q \rfloor - q \lfloor p \theta n + \beta_p \rfloor = pj - qi \in \mathbb{Z}.\] The claim follows. 
\end{proof}
Applying Lemma~\ref{le_rationalratioBohr}, we have reduced~\eqref{eq10} to showing that for all integers $r$, all $B\in \mathcal{B}_{\textnormal{convex}}$, and all pairs of distinct primes $p,q$ with $P \leqslant p < q$, we have
\begin{align}\label{eq6}
\sup_{ \gamma}\limsup_{X\to \infty}\left|\mathbb{E}_{n\leq X}^{\log}f_2(\lfloor p\theta n+j\rfloor)f_2(\frac{q\lfloor p\theta n+j\rfloor+r}{p})e(\gamma n)1_{B}(n)1_{q\lfloor p\theta n+j\rfloor+r\equiv 0\, (\operatorname{mod} \, p)}\right| =o_{P\rightarrow \infty}(1).   
\end{align}
It is simple to control the $r=0$ case. Indeed, note that $r=0$ implies
\begin{align*}
\lfloor p\theta n+j\rfloor\equiv 0\pmod p,    
\end{align*}
or equivalently
\begin{align*}
\theta n\in \left[\frac{-j}{p},\frac{1-j}{p}\right)  \mod 1. 
\end{align*}
Since $\theta$ is irrational, the Kronecker--Weyl theorem~\cite[Exercise 1.1.5]{tao12} tells us that this happens for $(1/p+o(1))X=o_{P\to \infty}(X)$ integers $n\leq X$. The contribution of such $n$ can be bounded trivially by the triangle inequality.

It remains to consider $r \neq 0$. We prove the following general result, as we will need to refer to it several times before the end of the paper. 
\begin{lemma}
\label{le_usedeverywhere}
Let $p,q \geqslant 1$ be coprime integers, $\beta \in \mathbb{R}$, $\theta >0$, $r$ a non-zero integer, and $B \in \mathcal{B}_{ \textnormal{convex}}$. Then, for any non-pretentious multiplicative function $f:\mathbb{N}\to [-1,1]$, we have 
\begin{equation}
\label{eq_6c}
\sup_{ \gamma}\limsup_{X\to \infty}\left|\mathbb{E}_{n\leq X}^{\log}f(\lfloor p\theta n+\beta\rfloor)f(\frac{q\lfloor p\theta n+\beta\rfloor+r}{p})e(\gamma n)1_{B}(n)1_{q\lfloor p\theta n+\beta\rfloor+r\equiv 0 \,\, \operatorname{mod} \, p}\right| =0. 
\end{equation}
\end{lemma}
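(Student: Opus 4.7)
The plan is to change variables $n \mapsto m = \lfloor p\theta n + \beta\rfloor$, converting the one-parameter average in $n$ into a two-point correlation of $f$ whose arguments are linear forms in $m$ with distinct leading coefficients $p$ and $q$. Applying Lemma~\ref{le_frantzikinakis}(1) to this transformed correlation then yields the claim, with the non-degeneracy in that lemma supplied precisely by the hypothesis $r \neq 0$.

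More concretely, the map $n \mapsto m$ has bounded multiplicity: the fibre $\{n : \lfloor p\theta n + \beta\rfloor = m\}$ is an interval of length $1/(p\theta)$, on which $1/n = p\theta/m + O(1/m^2)$. Interchanging the order of summation and absorbing the error from the replacement $1/n \mapsto p\theta/m$ (whose contribution to the logarithmic average is $o(1)$) gives
\[
\E^{\log}_{n\leq X} f(\lfloor p\theta n+\beta\rfloor)\, f\Bigl(\tfrac{q\lfloor p\theta n+\beta\rfloor+r}{p}\Bigr)\, e(\gamma n)\, 1_{B}(n)\, 1_{q\lfloor p\theta n+\beta\rfloor+r\equiv 0\,(p)}
\]
\[
= (p\theta+o(1))\,\E^{\log}_{m\leq p\theta X} f(m)\, f\Bigl(\tfrac{qm+r}{p}\Bigr)\, 1_{qm+r\equiv 0\,(p)}\, N_{B,p\theta,\beta,\gamma}(m) + o(1),
\]
where $N_{B,p\theta,\beta,\gamma}$ is the multiplicity function of Lemma~\ref{le_bohr2}. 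Applying that lemma decomposes $N_{B,p\theta,\beta,\gamma}(m)$, up to an $L^1$-error of density at most $\varepsilon$, as a trigonometric polynomial $\sum_{k \leq K_{\varepsilon}} c_{\varepsilon}(k)\, e(\gamma_{k,\varepsilon} m)$ with $O_\varepsilon(1)$ terms and bounded coefficients. Passing the residual error through the triangle inequality (and partial summation to convert natural to logarithmic average), and treating one phase at a time, it suffices to show that for every $\gamma^\prime\in\mathbb{R}$,
\[
\E^{\log}_{m\leq Y} f(m)\, f\Bigl(\tfrac{qm+r}{p}\Bigr)\, e(\gamma^\prime m)\, 1_{qm+r\equiv 0\,(p)} \longrightarrow 0.
\]

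Since $\gcd(p,q)=1$, the congruence $qm+r\equiv 0\pmod p$ isolates a unique residue class $m\equiv a\pmod p$ with $a\in\{1,\dots,p\}$; writing $m=p\ell+a$ makes $(qm+r)/p = q\ell + (qa+r)/p$ into an integer, and the average becomes (after absorbing the unimodular constant $e(\gamma^\prime a)$)
\[
\E^{\log}_{\ell\leq Y/p} f(p\ell+a)\, f\bigl(q\ell+\tfrac{qa+r}{p}\bigr)\, e(p\gamma^\prime\ell).
\]
The two linear forms satisfy the non-degeneracy hypothesis of Lemma~\ref{le_frantzikinakis}(1), since
\[
a_1 h_2 - a_2 h_1 = p\cdot\tfrac{qa+r}{p} - q\cdot a = r \neq 0.
\]
As $f$ is non-pretentious, Lemma~\ref{le_frantzikinakis}(1) (after a trivial shift of $\ell$, if necessary, so that $h_2$ is positive) delivers the desired vanishing.

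The main obstacle is bookkeeping rather than conceptual: one must verify that the errors generated by the change of variables and by the Fourier approximation of the multiplicity function remain negligible in logarithmic density, and that the parameter $r$ survives the reductions intact as the non-degeneracy witness for the two-point Elliott input. The actual cancellation is supplied entirely by the two-point logarithmic Elliott theorem of Tao, as packaged in Lemma~\ref{le_frantzikinakis}(1); the hypothesis $r \neq 0$ of our lemma is precisely what makes the resulting two linear forms non-proportional.
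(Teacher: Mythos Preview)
Your proof is correct and follows the paper's route: change variables to $m=\lfloor p\theta n+\beta\rfloor$, decompose $N_{B,p\theta,\beta,\gamma}$ via Lemma~\ref{le_bohr2}, and feed the resulting two-point correlation into Lemma~\ref{le_frantzikinakis}(1), with $r\neq 0$ supplying the non-degeneracy. The only difference is that you parametrise the congruence $qm+r\equiv 0\pmod p$ by substituting $m=p\ell+a$, whereas the paper expands it via additive characters; your handling is in fact slightly cleaner, since it keeps $(qm+r)/p$ an honest integer throughout.
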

\begin{proof}

Recalling that
\begin{align*}
N_{B,p\theta, \beta, \gamma}(m):=\sum\limits_{n \in B:\,\, m=\lfloor p\theta n+\beta\rfloor} e(\gamma n),    
\end{align*}
we rewrite \eqref{eq_6c} as 

\begin{align}\label{eq7}
\sup_{ \gamma}\limsup_{X\to \infty}\left|\mathbb{E}_{m\leq p\theta X}^{\log}f(m)f(qm+r)N_{B,p\theta,\beta,\gamma}(m)1_{qm+r\equiv 0\pmod p}\right|=0.  
\end{align}
By Lemma~\ref{le_bohr2}, we express $N_{B,p\theta, \beta, \gamma}(m)$ as a trigonometric polynomial up to small error. We also expand the condition $m \equiv -r \overline{q}\, \operatorname{mod} \, p$ by the exponential sum $$\frac{1}{p}\sum_{1\leq a\leq p}e(a(m + r\overline{q})/p).$$  It therefore suffices to show that
\begin{align}\label{eq8c}
 \sup_{\gamma}\limsup_{X\to \infty}\left|\mathbb{E}_{m\leq p\theta X}^{\log}f(m)f(qm+r)e(\gamma m)\right|=0.     
\end{align}
But this follows from Lemma~\ref{le_frantzikinakis} (since $r\in \mathbb{Z}\setminus \{0\}$). Thus the lemma has been proved.
\end{proof}

Applying Lemma~\ref{le_usedeverywhere} to expression~\eqref{eq6}, Theorem~\ref{thm_twopoint}(1) follows. As already remarked, the argument settled Theorem \ref{thm_twopoint}(2) as well.  \qed

\section{Proof of Theorem~\ref{thm_twopoint}(3)}
Since $\alpha_1/\alpha_2$ is rational, there are coprime positive integers $p$ and $q$ and real $\theta$ for which $\alpha_1 = p \theta$ and $\alpha _2 = q \theta$. By Lemma~\ref{le_rationalratioBohr}, there is an integer $J$ and a partition $\mathbb{Z}=B_{-J}\sqcup B_{-J+1}\sqcup \cdots \sqcup B_J$ such that $B_j$ is a disjoint union of Bohr sets in $\mathcal{B}_{1,\textnormal{convex}}$ with phase $\theta$ with
\begin{align*}
\lfloor q\theta n+\beta_2\rfloor=\frac{q\lfloor p\theta n+\beta_1\rfloor + j}{p}\quad \textnormal{whenever}\quad n\in B_j.    
\end{align*}

We claim that if $j \neq 0$ then
\begin{align}\label{eq_Aj}
|\mathbb{E}_{n\leq X}^{\log}\lambda(\lfloor \alpha_1 n+\beta_1\rfloor)\lambda(\lfloor \alpha_2 n+\beta_2\rfloor)1_{B_j}(n)|=o(1).    
\end{align}
Indeed, writing $B_j$ as a disjoint union of elements of $\mathcal{B}_{\textnormal{convex}}$ it is enough to show that 
 \begin{align*}
 |\mathbb{E}_{n\leq X}^{\log}\lambda(\lfloor p \theta  n+\beta_1\rfloor)\lambda(\frac{q\lfloor p \theta n+\beta_1\rfloor+j}{p})1_{B}(n)1_{q\lfloor p \theta n+\beta_1\rfloor + j\equiv 0\pmod p}|=o(1).   
\end{align*}
for any $B\in \mathcal{B}_{\textnormal{convex}}$. But this result follows directly from Lemma~\ref{le_usedeverywhere}. 

Consider now the contribution from $B_0$, namely
\begin{equation}
\E_{n \leqslant X}^{\log} \lambda(\lfloor p \theta n + \beta_1\rfloor) \lambda( \frac{q \lfloor p \theta n + \beta_1 \rfloor}{p})1_{B_0}(n).
\end{equation}
Since $B_0$ is a disjoint union of finitely many sets in $\mathcal{B}_{1, \textnormal{convex}}$ (call these Bohr sets $S_1, \dots, S_M$) we have \begin{equation}
\lim_{X \rightarrow \infty}\E_{n \leqslant X}^{\log} \lambda(\lfloor p \theta n + \beta_1\rfloor) \lambda( \frac{q \lfloor p \theta n + \beta_1 \rfloor}{p})1_{B_0}(n) = \lambda(p) \lambda(q) \sum_{i \leqslant M} \delta_{S_i}. 
\end{equation}
Including the terms with $n \in B_j$, for $j \neq 0$, we have \[\lim_{X \rightarrow \infty} \E_{n \leqslant X}^{\log} \lambda( \lfloor \alpha_1n + \beta_1 \rfloor) \lambda(\lfloor \alpha_2 n + \beta_2 \rfloor) = \lambda(p) \lambda(q) \sum_{i \leqslant M} \delta_{S_i}.\] In particular the limit exists. Finally, observe that for any Bohr set $S_i \in \mathcal{B}_{1, \textnormal{convex}}$ the density $\delta_{S_i}$ is positive if and only if $S_i$ is infinite. Therefore $\sum_{i \leqslant M} \delta_{S_i} = 0$ if and only if $B_0$ is finite. This completes the proof of the second part of Theorem~\ref{thm_twopoint}. \qed

\begin{remark}
It is clear from the proof that one could prove a similar result with $\lambda$ replaced by any non-pretentious completely multiplicative function $f:\mathbb{N}\to [-1,1]$ such that $f(n)\neq 0$ for all $n\geq 1$.
\end{remark}

\section{Higher order correlations}

In this section we will prove Theorem \ref{thm_hom99}. By Lemma~\ref{le_pretentious}, we already have Theorem~\ref{thm_hom99} part (1) in the case where $f_1,\ldots, f_k$ are pretentious. Hence, we may assume in this section that $f_1$ is non-pretentious. Then we have $\lim_{X\to \infty}\E_{n\leq X}^{\log}f_1(n)=0$ by Hal\'asz's theorem, so it suffices to show that
\begin{align*}
\limsup_{X\to \infty} \Big\vert \E_{n\leqslant X}^{\log} \prod\limits_{i=1}^k f_i(\lfloor \alpha _i n \rfloor)\Big\vert \leqslant 1 - \eta.
 \end{align*}

\begin{proof}[Proof of Theorem \ref{thm_hom99} part (1)]
For contradiction we assume that  \[ \Big\vert \E_{n\leqslant X}^{\log} \prod\limits_{i=1}^k f_i(\lfloor \alpha _i n \rfloor)\Big\vert \geqslant 1 - \eta\] for some fixed $\eta>0$ and for arbitrarily large values of $X$. Therefore there exists some $u \in \{-1,+1\}$ and $S_1 \subset [X]$ for which \[ \E_{n \leqslant X}^{\log} 1_{S_1}(n) \geqslant 1 - O(\eta)\] and \[ \Big\vert \prod\limits_{i \leqslant k} f_i( \lfloor \alpha_1 n \rfloor) - u \Big\vert \ll \eta\] for all $n \in S_1$.

Let $r \geqslant 2$ be prime. Define \[ D_r: = (\frac{1}{r^2}, \frac{2}{r^2}) \times (\frac{1}{r}, \frac{1}{r} + \frac{1}{r^2})^{k-1} \subset [0,1)^k.\] Since $1,\alpha_1,\dots,\alpha_k$ are linearly independent over $\mathbb{Q}$, by the Kronecker-Weyl theorem we have that the Bohr set $B_r: = B(\alpha, D_r)$ has positive density $\delta_{B_r} = r^{-2k}$. We also have that for all $n \in B_r$, 
\begin{align*}
 \lfloor \alpha_1 r^2 n\rfloor &= r \lfloor \alpha_1 rn \rfloor + 1\\
 \lfloor \alpha_i r^2 n \rfloor & = r \lfloor \alpha_i rn \rfloor \qquad ( i \geqslant 2)\\
 \lfloor \alpha_i rn \rfloor &\neq 0 \text{ mod } r \qquad ( i \geqslant 2).
 \end{align*}
 
Observe that 
\begin{align*}
\E_{n \leqslant X}^{\log}1_{r \vert n} 1_{S_1}(n) \geqslant \frac{1}{r} - O(\eta) - o(1).
\end{align*}
Hence \[\E_{n \leqslant X/r} ^{\log} 1_{S_1}(rn) \geqslant 1 - O(r \eta) - o(1)\] and so \[\E_{n \leqslant X} ^{\log} 1_{S_1}(rn) \geqslant 1 - O(r \eta) - o(1).\] From this argument, letting \[ S_2: = B_r \cap \{n: \, rn \in S_1\} \cap \{n : \, r^2n \in S_1\},\] we see 
\begin{align*}
\E_{n \leqslant X}^{\log} 1_{S_2}(n)  \geqslant \delta_{B_r} - O(r^2 \eta) - o(1). 
\end{align*}
Then for $n \in S_2$ we have \[ u+O(\eta) = \prod\limits_{i \leqslant k} f_i( \lfloor \alpha_i rn\rfloor) = f_1(\lfloor \alpha_1 rn \rfloor) \prod\limits_{i=2}^k f_i( \lfloor \alpha_i r n \rfloor)\] and 
\begin{align*}
u+O(\eta) = \prod\limits_{i \leqslant k} f_i( \lfloor \alpha_i r^2n \rfloor) &= f_1(r \lfloor \alpha_1  rn\rfloor + 1) \prod\limits_{i=2}^{k} f_i(r \lfloor \alpha_i rn \rfloor) \\
 & =f_1(r \lfloor \alpha_1 rn\rfloor + 1) \prod\limits_{i=2}^kf_i(r)\cdot \prod\limits_{i=2}^k f_i( \lfloor \alpha_ir n \rfloor)
 \end{align*} 
by multiplicativity and the fact that $(\lfloor \alpha_i rn\rfloor,r)=1$ for all $i \geqslant 2$.  

Note that if for some $u\in \{-1,+1\}$ and some real numbers $|u_i|\leq 1$ we have $u+O(\eta)=u_1u_3$ and $u+O(\eta)=u_2u_3$, then $|u_1u_2-1|=O(\eta)$. Therefore,
 \begin{equation}
 \label{eq_finalcont}
 \vert\E_{n \leqslant X}^{\log} 1_{B_{r}}(n)f_1(\lfloor \alpha_1r n\rfloor) f_1( r \lfloor \alpha_1 rn \rfloor + 1) \vert \geqslant \delta_{B_{r}} - O(r^2 \eta) - o(1).
 \end{equation}
 
However, applying Lemma~\ref{le_usedeverywhere} with $\theta = r \alpha_1$ we have
 \begin{equation}
 \label{eq_2pointfromkpoint}
 \vert\E_{n \leqslant X}^{\log} 1_{B_{r}}(n) f_1(\lfloor \alpha_1 rn\rfloor) f_1( r \lfloor \alpha_1 rn \rfloor + 1) \vert = o(1).
 \end{equation}
Expressions~\eqref{eq_finalcont} and~\eqref{eq_2pointfromkpoint} are in contradiction for large enough $X$ and small enough $\eta$. This resolves Theorem~\ref{thm_hom99} part (1).
\end{proof}
\begin{proof}[Proof of Theorem \ref{thm_hom99} part (2)]
 Let $\mathcal{V} = \{v_1,\dots,v_{k-k^\prime}\}$ denote the maximal linearly independent set of vectors $\mathcal{V} \subset \mathbb{Z}^k$ from the hypotheses of the theorem. By the abelian Ratner's theorem of~\cite[Proposition 1.1.5]{tao12} we may write $(\alpha_1,\dots, \alpha_k) = \alpha'+\alpha''$, where $\alpha':=(\alpha_1^{\prime},\dots, \alpha_k^\prime) \in \mathbb{R}^k$, $\alpha''=(\alpha_1^{\prime\prime},\dots,\alpha_k^{\prime\prime}) \in \mathbb{Q}^k$, and the sequence $\alpha' n  \, \text{mod } \mathbb{Z}^k$ is totally equidistributed in a subtorus $T^\prime \leqslant \mathbb{T}^k$. We also have that the dimension of $T^\prime$ is $k^\prime$, and $T^\prime$ is the projection modulo $\mathbb{Z}^k$ of $\{u \in \mathbb{R}^k: \, v_i \cdot u = 0 \text{ for all } i\}$. Letting $q$ be the least common multiple of the denominators of the $\alpha_i^{\prime\prime}$, we have $\alpha qn \equiv \alpha^\prime qn \, \text{mod } \mathbb{Z}^k$ for all $n \in \mathbb{Z}$.

For contradiction we assume that  \[ \Big\vert \E_{n\leqslant X}^{\log} \prod\limits_{i=1}^k f_i(\lfloor \alpha _i n \rfloor)\Big\vert \geqslant 1 - \eta\] for some fixed $\eta>0$ and for arbitrarily large values of $X$. Using the same argument as in the previous proof, this implies that 
\[ \Big\vert \E_{n \leqslant X}^{\log} \prod\limits_{i=1}^k f_i(\lfloor \alpha _i qn \rfloor)\Big\vert \geqslant 1 - O(q\eta) - o(1).\] Therefore there exists some $u \in \{-1,+1\}$ and $S_1 \subset [X]$ for which 
$$\mathbb{E}_{n\leq X}^{\log}1_{S_1}(n) \geqslant 1 - O(q \eta) - o(1)$$
and $$\Big\vert\prod_{i \leqslant k} f_i(\lfloor \alpha _i qn \rfloor)-u\Big\vert\ll q \eta$$ 
for all $n \in S_1$.

Let $r \geqslant 2$ be prime, and let $w \in \mathbb{R}_{>0}^k$ be the vector from the hypotheses of the theorem. Write $w = (w_1,\dots,w_k)$ and assume without loss of generality that $w_1 >w_2 \geqslant w_i > 0$ for all $i=3,\dots,k$. Define \[ D_{q,r}: = T^\prime \cap \Big( (\frac{1}{qr}, \frac{2}{qr}) \times (0,\frac{1}{qr})^{k-1}\Big) \quad \text{mod } \mathbb{Z}^k.\] We claim that $D_{q,r}\neq \emptyset$. Indeed, since $w_1$ is strictly larger than $w_2$ we may choose $c \in \mathbb{R}$ satisfying \[c \in (\frac{1}{qr w_1}, \min(\frac{2}{qr w_1},\frac{1}{qr w_2})).\] Since $c w \cdot v_j = 0$ for all $j$, we conclude that $cw \text{ mod } \mathbb{Z}^k \in T^\prime$. But by assumptions on the sizes of the $w_i$, \[c w \in \Big( (\frac{1}{qr}, \frac{2}{qr}) \times (0,\frac{1}{qr})^{k-1}\Big).\] So $cw \text{ mod } \mathbb{Z}^k \in D_{q,r}$.

Thus $D_{q,r}$ is a non-empty open subset of $T^\prime$ in the subspace topology. Therefore, when $T^\prime$ is endowed with the normalised Haar measure $\mu$,  we have $ \mu(D_{q,r}) >0$. Since the sequence $\alpha^\prime n$ is totally equidistributed in $T^\prime$, we know that the Bohr set $B_{q,r} \in \mathcal{B}_{\textnormal{convex}}$ defined by \[B_{q,r}: = B\Big(\alpha^\prime, ( \frac{1}{qr}, \frac{2}{qr}) \times (0, \frac{1}{qr})^{k-1}\Big)\] is equal to $B( \alpha^\prime, D_{q,r})$ and has density $\delta_{B_{q,r}} = \mu(D_{q,r}) >0$. 

Let \[ S_2: = B_{q,r} \cap S_1 \cap \{n: \, rn \in S_1\}.\] Then, by the same argument we used to lower-bound $\E_{n \leqslant X}^{\log} 1_{S_1}(n)$, we conclude that \[\E_{n \leqslant X}^{\log} 1_{S_2}(n) \geqslant \delta_{B_{q,r}} - O(r q\eta) - o(1).\] Furthermore, using the fact that $\alpha qn \equiv \alpha^\prime qn \, \text{mod } \mathbb{Z}^k$ for all $n \in \mathbb{Z}$, for $n \in S_2$ we have
\begin{align*}
 \lfloor \alpha_1 qrn\rfloor&=r\lfloor \alpha_1qn\rfloor+1 \\ \lfloor \alpha_iqrn\rfloor&=r\lfloor \alpha_iqn\rfloor \qquad (2\leq i\leq k).
\end{align*}

Then for $n\in S_2$ we have \[ u+O(q\eta) = \prod\limits_{i \leqslant k} f_i( \lfloor \alpha_i qn\rfloor) = f_1(\lfloor \alpha_1 qn \rfloor) \prod\limits_{i=2}^k f_i( \lfloor \alpha_i q n \rfloor)\] and 
\begin{align*}
u+O(q\eta) = \prod\limits_{i \leqslant k} f_i( \lfloor \alpha_i qrn \rfloor) &= f_1(r \lfloor \alpha_1  qn\rfloor + 1) \prod\limits_{i=2}^{k} f_i(r \lfloor \alpha_i qn \rfloor) \\
 & =f_1(r \lfloor \alpha_1 qn\rfloor + 1) \prod\limits_{i=2}^kf_i(r)\cdot \prod\limits_{i=2}^k f_i( \lfloor \alpha_iq n \rfloor)
 \end{align*} 
by complete multiplicativity of $f_2,\dots,f_k$. Arguing analogously to the previous proof, we conclude that
 \begin{equation}
 \label{eq_finalconta}
 \vert\E_{n \leqslant X}^{\log} 1_{B_{q,r}}(n)f_1(\lfloor \alpha_1q n\rfloor) f_1( r \lfloor \alpha_1 qn \rfloor + 1) \vert \geqslant \delta_{B_{q,r}} - O(r q \eta) - o(1).
 \end{equation}
However, applying Lemma~\ref{le_usedeverywhere} with $\theta = q \alpha_1$ we have
 \begin{equation}
 \label{eq_2pointfromkpointa}
 \vert\E_{n \leqslant X}^{\log} 1_{B_{q,r}}(n) f_1(\lfloor \alpha_1 qn\rfloor) f_1( r \lfloor \alpha_1 qn \rfloor + 1) \vert = o(1).
 \end{equation}
Expressions~\eqref{eq_finalconta} and~\eqref{eq_2pointfromkpointa} are in contradiction for large enough $X$ and small enough $\eta$. This resolves Theorem~\ref{thm_hom99} part (2).
\end{proof}

\begin{remark}
Only the multiplicativity of $f_1$ and the complete multiplicativity of $f_2,\dots f_k$ at $r$ was used in the proof of Theorem \ref{thm_hom99}(2). Unfortunately the method only saves a value $\eta \ll q^{-k-1} r^{-k -1}$ over the trivial bound, and this seems to be not enough to remove the complete multiplicativity assumption using the device from the proof of Lemma \ref{le_frantzikinakis}. 
\end{remark}

\bibliographystyle{plain}
\bibliography{refsupdated}

\end{document}